\documentclass[11pt,reqno,oneside]{amsproc}
\title[Lower bounds on blowup rate]{Lower bounds on the blowup rate of vorticity in the Euler equations}

\author[B.~Ingimarson]{Benjamin Ingimarson}
\address{Department of Mathematics, University of Southern California, Los Angeles, CA 90089}
\email{ingimars@usc.edu}

\author[I.~Kukavica]{Igor Kukavica}
\address{Department of Mathematics, University of Southern California, Los Angeles, CA 90089}
\email{kukavica@usc.edu}

  \chardef\forshowkeys=0
  \chardef\showllabel=0
  \chardef\refcheck=0
  \chardef\sketches=0
  \chardef\figures=1

\usepackage{enumitem,mathtools}
\usepackage{datetime}
\usepackage{fancyhdr}
\usepackage{comment}
\usepackage{listings}

\allowdisplaybreaks
\ifnum\forshowkeys=1

  \usepackage[notref,notcite,color]{showkeys}
\fi
\usepackage[margin=1in]{geometry}
\usepackage{amsmath, amsthm, amssymb}
\usepackage{times}
\usepackage{graphicx}
\usepackage[usenames,dvipsnames,svgnames,table]{xcolor}
\usepackage{marginnote}
\usepackage[unicode,breaklinks=true,colorlinks=true,linkcolor=blue,urlcolor=blue,citecolor=blue]{hyperref}
\usepackage[most]{tcolorbox}
\usepackage{tikz}

\ifnum\refcheck=1
  \usepackage{refcheck}
\fi

\usepackage{import}
\usepackage{xifthen}
\usepackage{pdfpages}
\usepackage{transparent}

\begin{document}
\def\YY{X}
\def\OO{\mathcal O}
\def\SS{\mathbb S}
\def\CC{\mathbb C}
\def\RR{\mathbb R}
\def\TT{\mathbb T}
\def\ZZ{\mathbb Z}
\def\HH{\mathbb H}
\def\RSZ{\mathcal R}
\def\LL{\mathcal L}
\def\SL{\LL^1}
\def\ZL{\LL^\infty}
\def\GG{\mathcal G}
\def\tt{\langle t\rangle}
\def\erf{\mathrm{Erf}}
\def\mgt#1{\textcolor{magenta}{#1}}
\def\ff{\rho}
\def\gg{G}
\def\sqrtnu{\sqrt{\nu}}
\def\ww{w}
\def\ft#1{#1_\xi}
\def\les{\lesssim}
\def\lec{\lesssim}
\def\ges{\gtrsim}
\def\gec{\gtrsim}
\renewcommand*{\Re}{\ensuremath{\mathrm{{\mathbb R}e\,}}}
\renewcommand*{\Im}{\ensuremath{\mathrm{{\mathbb I}m\,}}}
\ifnum\showllabel=1
 \def\llabel#1{\marginnote{\color{lightgray}\rm\small(#1)}[-0.0cm]\notag}
%%%%%%%%%%%%%%%%%%%\def\llabel#1{\label{#1}}
%  \reversemarginpar
  %\def\llabel#1{\notag}
%  \def\llabel#1{\label{#1}}
\else
% % \def\llabel#1{\nonumber}
 \def\llabel#1{\notag}
\fi
\newcommand{\norm}[1]{\left\|#1\right\|}
\newcommand{\nnorm}[1]{\lVert #1\rVert}
\newcommand{\abs}[1]{\left|#1\right|}
\newcommand{\NORM}[1]{|\!|\!| #1|\!|\!|}

\newtheorem{Theorem}{Theorem}[section]
\newtheorem{Corollary}[Theorem]{Corollary}
\newtheorem{Proposition}[Theorem]{Proposition}
\newtheorem{Lemma}[Theorem]{Lemma}
\newtheorem{Remark}[Theorem]{Remark}
\newtheorem{definition}{Definition}[section]

\def\theequation{\thesection.\arabic{equation}}
\numberwithin{equation}{section}
%%%%%%%%%%%%%%%%%%%%%%%%%%
\definecolor{mygray}{rgb}{.6,.6,.6}
\definecolor{myblue}{rgb}{9, 0, 1}
\definecolor{colorforkeys}{rgb}{1.0,0.0,0.0}
\newlength\mytemplen
\newsavebox\mytempbox
\makeatletter
\newcommand\mybluebox{%
    \@ifnextchar[%]
       {\@mybluebox}%
       {\@mybluebox[0pt]}}
\def\@mybluebox[#1]{%
    \@ifnextchar[%]
       {\@@mybluebox[#1]}%
       {\@@mybluebox[#1][0pt]}}
\def\@@mybluebox[#1][#2]#3{
    \sbox\mytempbox{#3}%
    \mytemplen\ht\mytempbox
    \advance\mytemplen #1\relax
    \ht\mytempbox\mytemplen
    \mytemplen\dp\mytempbox
    \advance\mytemplen #2\relax
    \dp\mytempbox\mytemplen
    \colorbox{myblue}{\hspace{1em}\usebox{\mytempbox}\hspace{1em}}}
\makeatother
%%%%%%%%%%%%%%%%%%%%%%%%%%

%Igor's macros  varmac
\def\pv{\text{p.v.}}
\def\bnew{\color{red}}
\def\enew{\color{black}}
\def\bold{\color{blue}}
\def\eold{\color{black}}
\def\rr{r}
\def\weaks{\text{\,\,\,\,\,\,weakly-* in }}
\def\inn{\text{\,\,\,\,\,\,in }}
\def\cof{\mathop{\rm cof\,}\nolimits}
\def\Dn{\frac{\partial}{\partial N}}
\def\Dnn#1{\frac{\partial #1}{\partial N}}
\def\tdb{\tilde{b}}
\def\tda{b}
\def\qqq{u}
\def\lat{\Delta_2}
\def\biglinem{\vskip0.5truecm\par==========================\par\vskip0.5truecm}
\def\inon#1{\hbox{\ \ \ \ \ \ \ }\hbox{#1}}                %in or on
\def\onon#1{\inon{on~$#1$}}
\def\inin#1{\inon{in~$#1$}}
\def\FF{F}
\def\andand{\text{\indeq and\indeq}}
\def\ww{w(y)}
\def\ll{{\color{red}\ell}}
\def\ee{\mathrm{e}}
\def\startnewsection#1#2{ \section{#1}\label{#2}\setcounter{equation}{0}}   
%\starts a new section\def\dist{\mathop{\rm dist}\nolimits}
\def\nnewpage{ }
\def\sgn{\mathop{\rm sgn\,}\nolimits}    
\def\Tr{\mathop{\rm Tr}\nolimits}    
\def\div{\mathop{\rm div}\nolimits}
\def\curl{\mathop{\rm curl}\nolimits}
\def\dist{\mathop{\rm dist}\nolimits}
\def\id{\mathop{\rm id}\nolimits}
\def\supp{\mathop{\rm supp}\nolimits}
\def\indeq{\quad{}}           
\def\period{.}                       
\def\semicolon{\,;}                  
\def\cmi#1{\text{~{{\coli IK: \underline{#1}}}~}}
\def\coli{\color{colorigor}}
\definecolor{colorigor}{rgb}{.5, 0.2, 0.8}
\def\colr{\color{red}}
\def\colrr{\color{black}}
\def\colb{\color{black}}
\def\coly{\color{lightgray}}
\definecolor{colorgggg}{rgb}{0.1,0.5,0.3}
\definecolor{colorllll}{rgb}{0.0,0.7,0.0}
\definecolor{colorhhhh}{rgb}{0.3,0.75,0.4}
\definecolor{colorpppp}{rgb}{0.7,0.0,0.2}
%\definecolor{coloroooo}{rgb}{0.9,0.4,0}
\definecolor{coloroooo}{rgb}{0.45,0.0,0.0}
\definecolor{colorqqqq}{rgb}{0.1,0.7,0}
\def\colg{\color{colorgggg}}
\def\collg{\color{colorllll}}
\def\cole{\color{coloroooo}}
\def\coleo{\color{colorpppp}}
\def\colu{\color{blue}}
\def\colc{\color{colorhhhh}}
\def\colW{\colb}   %color for weight
\definecolor{coloraaaa}{rgb}{0.6,0.6,0.6}%%%out
\def\colw{\color{coloraaaa}}
\def\comma{ {\rm ,\qquad{}} }            
\def\commaone{ {\rm ,\quad{}} }          
\def\les{\lesssim}
\def\nts#1{{\color{red}\hbox{\bf ~#1~}}} 
\def\ntsf#1{\footnote{\color{colorgggg}\hbox{#1}}} 
\def\blackdot{{\color{red}{\hskip-.0truecm\rule[-1mm]{4mm}{4mm}\hskip.2truecm}}\hskip-.3truecm}
\def\bluedot{{\color{blue}{\hskip-.0truecm\rule[-1mm]{4mm}{4mm}\hskip.2truecm}}\hskip-.3truecm}
\def\purpledot{{\color{colorpppp}{\hskip-.0truecm\rule[-1mm]{4mm}{4mm}\hskip.2truecm}}\hskip-.3truecm}
\def\greendot{{\color{colorgggg}{\hskip-.0truecm\rule[-1mm]{4mm}{4mm}\hskip.2truecm}}\hskip-.3truecm}
\def\cyandot{{\color{cyan}{\hskip-.0truecm\rule[-1mm]{4mm}{4mm}\hskip.2truecm}}\hskip-.3truecm}
\def\reddot{{\color{red}{\hskip-.0truecm\rule[-1mm]{4mm}{4mm}\hskip.2truecm}}\hskip-.3truecm}
\def\tdot{{\color{green}{\hskip-.0truecm\rule[-.5mm]{3mm}{3mm}\hskip.2truecm}}\hskip-.1truecm}
\def\gdot{\greendot}
\def\bdot{\bluedot}
\def\ydot{\cyandot}
\def\rdot{\cyandot}
\def\fractext#1#2{{#1}/{#2}}
\def\ii{\hat\imath}
%%%%%%%
\def\fei#1{\textcolor{blue}{#1}}
\def\vlad#1{\textcolor{cyan}{#1}}
\def\igor#1{\text{{\textcolor{colorqqqq}{#1}}}}
\def\igorf#1{\footnote{\text{{\textcolor{colorqqqq}{#1}}}}}
%varmac
%\def\AA{Y}
\def\Omf{\Omega_{\text f}}
\def\Ome{\Omega_{\text e}}
\def\Omb{\Omega_{\text b}}
\def\Gaf{\Gamma_{\text f}}
\def\Gae{\Gamma_{\text e}}
\def\Gab{\Gamma_{\text b}}
\def\Gac{\Gamma_{\text c}}
\def\Nf{N^{\text f}}
\def\Ne{N^{\text e}}

\newcommand{\p}{\partial}
\renewcommand{\d}{\mathrm{d}}
\newcommand{\UE}{U^{\rm E}}
\newcommand{\PE}{P^{\rm E}}
\newcommand{\KP}{K_{\rm P}}
\newcommand{\uNS}{u^{\rm NS}}
\newcommand{\vNS}{v^{\rm NS}}
\newcommand{\pNS}{p^{\rm NS}}
\newcommand{\omegaNS}{\omega^{\rm NS}}
\newcommand{\uE}{u^{\rm E}}
\newcommand{\vE}{v^{\rm E}}
\newcommand{\pE}{p^{\rm E}}
\newcommand{\omegaE}{\omega^{\rm E}}
\newcommand{\ua}{u_{\rm   a}}
\newcommand{\va}{v_{\rm   a}}
\newcommand{\omegaa}{\omega_{\rm   a}}
\newcommand{\ue}{u_{\rm   e}}
\newcommand{\ve}{v_{\rm   e}}
\newcommand{\omegae}{\omega_{\rm e}}
\newcommand{\omegaeic}{\omega_{{\rm e}0}}
\newcommand{\ueic}{u_{{\rm   e}0}}
\newcommand{\veic}{v_{{\rm   e}0}}
\newcommand{\up}{u^{\rm P}}
\newcommand{\vp}{v^{\rm P}}
\newcommand{\tup}{{\tilde u}^{\rm P}}
\newcommand{\bvp}{{\bar v}^{\rm P}}
\newcommand{\omegap}{\omega^{\rm P}}
\newcommand{\tomegap}{\tilde \omega^{\rm P}}
\newcommand{\eps}{\varepsilon}  %ben's macro
\newcommand{\eqnb}{\begin{equation}}
\newcommand{\eqne}{\end{equation}}
  
\renewcommand{\up}{u^{\rm P}}
\renewcommand{\vp}{v^{\rm P}}
\renewcommand{\omegap}{\Omega^{\rm P}}
\renewcommand{\tomegap}{\omega^{\rm P}}

\begin{abstract}
Under the assumption that a solution to the 3D incompressible Euler equations blows up at a time $T_\ast$ and that $T_\ast $ is the first such time, we establish lower bounds on the rate of blow-up of the maximum norm of the vorticity.
In particular, when the domain is $\mathbb{R}^3$ or $\mathbb{T}^3$, we
provide lower bounds on $\int_{0}^{t}\Vert \omega\Vert_{L^\infty}\,ds$ and
$\sup_{s\in[0,t]}\|\omega\|_{L^\infty}$ for $t$ sufficiently close to~$T_\ast$.
Notably, this gives a quantitative description of the BKM blow-up criterion.
Moreover, we provide pointwise-in-time lower bounds on~$\|D^k \omega\|_{L^\infty}$.
Finally, we state some consequences on the blow-up rate of the derivative of the deformation tensor. 
%Under the assumption that a solution to the 3D incompressible Euler equations blows up at a time $T_\ast$ and that $T_\ast $ is the first such time, we establish lower bounds on the rate of blow-up of the maximum norm of the vorticity.
%In particular, when the domain is $\mathbb{R}^3$ or $\mathbb{T}^3$, we provide lower bounds on the accumulation of $\|\omega\|_{L^\infty}$ up to time $t$ and the supremum over $[0,t]$ of $\|\omega\|_{L^\infty}$ for $t$ sufficiently close to~$T_\ast$.
%Notably, this gives a quantitative description of the BKM blow-up criterion.
%Moreover, we provide pointwise-in-time lower bounds on $\|D^k \omega\|_{L^\infty}$.
%Finally, we state some consequences on the blow-up rate of the derivative of the deformation tensor. 
%\eold
\end{abstract}

\maketitle

\setcounter{tocdepth}{2} 
\section{Introduction}\label{sec00}
We consider the 3D incompressible Euler equations, 
  \begin{align}
   \begin{split}
    u_t
    + u \cdot \nabla u
    + \nabla p
    &= 0
    ,
    \\
    \div u
    &= 0
    ,
   \end{split}
   \label{EQ01}
  \end{align}
in $\Omega \times [0,T]$ for the case that $\Omega = \mathbb{R}^3$ or~$\mathbb{T}^3$.
The classical local existence theorem, see~\cite{EM,K,T}, states that for an initial velocity field $u_0 \in H^r$, with $r > 2.5$, where $\|u_0\|_{H^r} \leq M$ for some $M > 0$, there exists a time $T = T(M) > 0$ such that~\eqref{EQ01} has a solution in the class,
  \begin{equation}
   u 
   \in
   C([0,T]; H^r(\Omega)) \cap C^1 ([0,T]; H^{r-1}(\Omega))
   .
   \label{EQaa}
  \end{equation}
A classical result by Beale, Kato, and Majda~\cite{BKM} asserts that if such a solution leaves the class~\eqref{EQaa} at a time $T_\ast$ and that $T_\ast$ is the first such time, then the spatial $L^\infty$ norm of the vorticity blows up as~$t\to T_\ast^-$.
More precisely, 
  \begin{equation}
   \int_0^{T_\ast} \|\omega(t)\|_{L^\infty}
   = \infty
   ,
   \label{EQ03}
  \end{equation}
and, consequently,
  \begin{equation}
   \limsup_{t \to T_\ast^-} \|\omega(t)\|_{L^\infty}
   = \infty
   ,
   \label{EQcc}
  \end{equation}
where $\omega = \curl u$ denotes the vorticity, see also \cite[Theorem~2.6]{BV} and \cite[Theorem~4.3]{MB}.
In other words, the solution $u$ stays smooth provided the maximum norm of the vorticity is time-integrable. 

However, not much is known regarding the rate at which either of the two quantities~\eqref{EQ03} or~\eqref{EQcc} blow up. 
In this paper, we prove lower bounds on the blow up rate of the accumulation of vorticity, $\int_0^t \|\omega(s)\|_{L^\infty}$, and $\sup_{s \leq t} \|\omega(s)\|_{L^\infty }$, for $t$ sufficiently close to~$T_\ast$. 
We also provide \textit{pointwise-in-time} lower bounds on the higher-order derivatives,~$\|D^k\omega(s)\|_{L^\infty}$.
We end the paper with some consequences on the blow-up of $\|D\sigma\|_{L^\infty}$, where $\sigma$ denotes the deformation tensor.

In previous literature, several variants of the BKM criterion have been shown to hold. 
Indeed, Ferrari~\cite{F} proved the criterion~\eqref{EQ03} in the case of a bounded domain, see also~\cite{SY,Z}.
As for different quantities in place of the maximum norm of the vorticity, Ponce~\cite{P} proved that it can be replaced with the maximum norm of the deformation tensor. 
Then, Constantin, Fefferman, and Majda~\cite{CFM} established a blow-up criterion involving the vorticity direction and the maximum norm of the velocity. 
In a result which directly improves the BKM criterion, Kozono and Taniuchi~\cite{KT} demonstrated that the $L^\infty$ norm in~\eqref{EQ03} can be replaced by the BMO norm.
With Ogawa~\cite{KOT}, they further improved this result by replacing the BMO norm with~$\dot{B}_{\infty,\infty}^0$.
Finally, we note that Aydin~\cite{A} established that the breakdown of a solution $u$ is contingent on the $L^2$ integrability of only its tangential derivatives. 

These results are, of course, a~priori in nature; whether or not solutions to~\eqref{EQ01} with smooth initial data blow up in finite time remains an open problem.
However, under assumptions with initial data in lower regularity spaces, there have been major advances in recent years. 
Elgindi~\cite{E} established finite-time singularity formation in the axisymmetric no-swirl class for initial data in $C^{1,\alpha}$ for sufficiently small $\alpha > 0$.
We also note the work by C\'ordoba, Martinez-Zoroa, and Yang~\cite{CMZ} in which they established finite-time blow-up for a class of initial data in~$C^\infty (\mathbb{R}^3 \setminus \{0\}) \cap C^{1,\alpha} \cap L^2 $, with $\alpha > 0$ sufficiently small. 
Recently, Shkoller~\cite{S} obtained a threshold result by showing finite-time Type-I blow-up in the axisymmetric no-swirl class with initial data in $C^{1,1/3-}\cap L^2$,
constructing solutions satisfying $\|\omega(t)\|_{L^\infty} \sim (T_\ast -t)^{-1}$ as~$t \to T_\ast^-$.
Finally, we note the paper by Jeong, Mart\'inez-Zoroa, and O\.{z}a\'{n}ski~\cite{JMO}, in which they proved instantaneous and continuous-in-time loss of supercritical Sobolev regularity for~\eqref{EQ01}. 

Assuming a solution blows up at a time $T_\ast$, a natural question concerns the rate at which the singularity forms.
Under the assumption of blow-up, one can prove a lower bound on the blow-up rate if the quantity satisfies, say, a Riccati-type inequality of the form, $\dot{x} \les x^\beta$, where $\beta > 1$. 
For a solution $u$ to the 3D incompressible Euler equations which leaves the class~\eqref{EQaa} at a time $T_\ast$, Chen and Pavlovi\'c~\cite{CP} proved  pointwise lower bounds on $u$ in the $H^r$ norm for $t$ sufficiently close to~$T_\ast$
of the order $(T_\ast - t)^{-2\delta/5-1}$.

Recently, there has been interest in the rate of blow-up of the vorticity itself.
Motivated by solutions with self-similar blow-up, Constantin, Ignatova, and Vicol~\cite{CIV} established a criterion for the rate of blow-up of~$\|D\omega\|_{L^\infty}$.
Assuming $\omega_0 \in C^1$ and that a solution to~\eqref{EQ01} with finite initial kinetic energy blows up at a time $T_\ast$, they proved
  \begin{equation}
   \sup_{t \in[0,T_\ast)} (T_\ast-t)^{1+\gamma} \|D \omega\|_{L^\infty(\mathbb{R}^3)}
   =
   \infty
   \label{EQdd}
  \end{equation}
for all $\gamma < 2/5$. 

In this paper, we improve on the classical BKM criterion with explicit lower bounds on the blow up rate of the accumulation of vorticity, $\int_0^t \|\omega(s)\|_{L^\infty}$, and non-integrable lower bounds on the quantity, $\sup_{s \leq t} \|\omega(s)\|_{L^\infty}$, for $t$ sufficiently close to~$T_\ast$.
Moreover, we
obtain the lower bound \eqref{EQdd} with $\gamma=2/5$.
Also, for any $k\in\mathbb{N}$, we provide pointwise lower bounds on the quantity~$\|D^k \omega(t)\|_{L^\infty} $.
Finally, we detail some consequences on the blow-up rate of the deformation tensor.
These results are detailed in Section~\ref{sec01}.

In Section~\ref{sec02}, we prove Theorem~\ref{T01}, which provides lower bounds on~$\omega$. 
To establish the estimate on the accumulation of vorticity, we first derive a Gronwall-type inequality of the form~\eqref{EQ111} for $\|\omega(t)\|_{L^\infty}$ on the interval $[0,T_\ast)$. 
However, it turns out that the differential inequality 
contains an exponential of the accumulation $\int_0^t \|\omega\|_{L^\infty}$, which
does not seem amenable to a barrier argument. To bypass this, we 
parameterize time in terms of the accumulation of the vorticity, called~$\tau$, see~\eqref{EQ112} below. 
We then reformulate our exponential inequality for $(d/dt) \|\omega(t)\|_{L^\infty}$ in terms of the new parameter.
The lower bound on the accumulation is then obtained after considering~$dt/d\tau$. 

In Section~\ref{sec03}, we prove Theorems~\ref{T02},~\ref{T03}, and~\ref{T04}.
Theorem~\ref{T02} provides pointwise-in-time lower bounds on the blow-up of~$\|D\omega\|_{L^\infty}$.
The proof starts with a lemma stating an interpolation inequality relating $\|D^{1/2}\omega\|_{L^8}$ and~$\|D\omega\|_{L^\infty}$.
We then derive a Riccati-type inequality for the $L^8$ norm,
from which we deduce a lower bound on~$\|D\omega\|_{L^\infty}$.
We note that this lower bound is at the critical exponent $\gamma = 2/5$. 
This result implies that the singularity for $\|D\omega\|_{L^\infty}$ develops as a pointwise limit to infinity. 
In Theorem~\ref{T03}, we demonstrate that Theorem~\ref{T02} may be extended to analogous pointwise-in-time lower bounds for~$\|D^k\omega\|_{L^\infty}$.
Finally, in Theorem~\ref{T04}, we provide lower bounds on the uniform norm of
the derivative of the stress $\sigma=\nabla u+ (\nabla u)^{\text T}$
considered by Ponce in~\cite{P}.

\section{Main results}\label{sec01}
The first theorem provides lower bounds on the blow-up rate of $\omega$ when $\Omega = \mathbb{R}^3$ or~$\mathbb{T}^3$. 

\begin{Theorem}
\label{T01}
Let $u$ be a solution to the Euler equations~\eqref{EQ01} for $\Omega = \mathbb{R}^3$ or $\mathbb{T}^3$ in the regularity class~\eqref{EQaa} with $r = 3$. 
Suppose there is a time $T_\ast>0$ such that the solution cannot be continued in this class to $T= T_\ast$ and that $T_\ast$ is the first such time. 
Then, we have 
  \begin{equation}
   \int_0^{t} \|\omega(s) \|_{L^\infty}\, ds
   \geq 
   \frac{1}{{C}}\log\log \frac{1}{T_\ast -t}
   ,
   \quad\quad
   T_\ast -t 
   <
   \frac{1}{C}
   ,
   \label{EQ101}
  \end{equation}
and, moreover, 
  \begin{equation}
   \limsup_{t \to T_\ast^-}\, (T_\ast -t) \log\left( \frac{1}{T_\ast - t}\right) \|\omega(t)\|_{L^\infty}
   \geq
   \frac{1}{C}
   ,
   \label{EQ101a}
  \end{equation}
where $C$ denotes a constant depending on~$\|u_0\|_{H^3}$.  
\end{Theorem}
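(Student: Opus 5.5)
We will work throughout with the energy-type quantity $y(t)=\|u(t)\|_{H^3}^2$ and with the accumulation $\tau(t)=\int_0^t\|\omega(s)\|_{L^\infty}\,ds$.

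\textbf{Step 1: the $H^3$ energy estimate.} The standard estimate uses the Kato--Ponce commutator bound and the logarithmic Sobolev (Beale--Kato--Majda) inequality
\[
\|\nabla u\|_{L^\infty}\les 1+\|\omega\|_{L^\infty}\bigl(1+\log^+\|u\|_{H^3}\bigr)+\|u\|_{L^2}.
\]
Since energy is conserved, this gives
\[
\frac{d}{dt}\log(e+y)\le C\bigl(1+\|\omega\|_{L^\infty}\bigr)\log(e+y).
\]
Gronwall then yields
\[
\log(e+y(t))\le C\,e^{C(t+\tau(t))},
\]
so that $y(t)\le \exp(C e^{C\tau(t)})$ for $t<T_\ast\le$ any fixed horizon (if $T_\ast$ is large we simply absorb the time into the constants).

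\textbf{Step 2: the bound \eqref{EQ101}.} We use a Riccati-type local-existence argument from the time $t$ up to $T_\ast$. Without the logarithmic inequality we have
\[
\dot y\le C\|\nabla u\|_{L^\infty}y\le Cy^{3/2},
\]
so the solution starting at time $t$ persists for a time at least $c/y(t)^{1/2}$. Since the solution cannot be continued past $T_\ast$, this forces
\[
T_\ast-t\ge c\,y(t)^{-1/2}.
\]
Combining with Step 1 gives
\[
\log\frac{1}{T_\ast-t}\le C+\tfrac12\log y(t)\le C e^{C\tau(t)}.
\]
Taking logarithms once more gives $\tau(t)\ge C^{-1}\log\log\frac{1}{T_\ast-t}$ once $T_\ast-t$ is small. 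This is a cleaner route than the reparametrization in $\tau$ described in the introduction, and I would try it first. If the constants depend on $T_\ast$ in an unwanted way, I would fall back on the $\tau$-parametrization: set $\Phi(\tau)=\log(e+y)$, obtain $d\Phi/d\tau\le C\Phi$, and bound $dt/d\tau$.

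\textbf{Step 3: the bound \eqref{EQ101a}.} We argue by contradiction. Suppose that
\[
\|\omega(s)\|_{L^\infty}\le \frac{\varepsilon}{(T_\ast-s)\log\frac{1}{T_\ast-s}}\qquad\text{for all } s\in[t_0,T_\ast).
\]
Integrating, and noting that the antiderivative of $1/\bigl((T_\ast-s)\log\frac{1}{T_\ast-s}\bigr)$ is $-\log\log\frac{1}{T_\ast-s}$, we get
\[
\tau(t)\le \tau(t_0)+\varepsilon\log\log\frac{1}{T_\ast-t}.
\]
For $\varepsilon<1/(2C)$ and $t$ near $T_\ast$ this contradicts \eqref{EQ101}. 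Hence the $\limsup$ is at least $1/(2C)$.

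\textbf{Main obstacle.} The delicate point is Step 1. We must make sure that the logarithmic inequality produces a linear-in-$\log y$ differential inequality, so that the bound is double-exponential rather than worse. We must also check that all constants depend only on $\|u_0\|_{H^3}$, which requires a bound on $T_\ast$ in terms of the data or a localization to $[T_\ast-1,T_\ast)$. On $\mathbb{T}^3$ the logarithmic inequality also needs the mean-zero/low-frequency part, which we control by the conserved $L^2$ norm.
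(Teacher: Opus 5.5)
Your proof is correct, and for \eqref{EQ101} it takes a genuinely different and shorter route than the paper.

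The paper works with $x=\|\omega\|_{L^\infty}$ itself. It combines the Lagrangian bound \eqref{EQ104}, the logarithmic inequality \eqref{EQ105}, and the double-exponential bound $1+\log_+\|u\|_{H^3}\lesssim e^{\tilde C\tau}$ to get $\dot x\lesssim x+x^2e^{\tau}$. It then reparametrizes time by $\tau$, bounds $X(\tau)\le C(1+\tau)e^{Ce^{\tau}}$, and uses $T_\ast-t=\int_{\tau}^{\infty}d\tau'/X(\tau')$. That identity needs $\tau(t)\to\infty$ as $t\to T_\ast^-$, i.e.\ it takes the BKM criterion as input.

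You stop at the double-exponential bound on the $H^3$ norm, which the paper also derives along the way. You pair it with the elementary lifespan estimate $\|u(t)\|_{H^3}\ge c/(T_\ast-t)$, which comes from the crude inequality $\frac{d}{dt}\|u\|_{H^3}^2\lesssim\|u\|_{H^3}^3$ and the continuation criterion of the local theory. Two logarithms then give \eqref{EQ101}. This avoids the vorticity ODE and the change of variables entirely, and it reproves BKM quantitatively instead of assuming it. Nothing is lost in the constant: in both arguments the coefficient of $\log\log$ is governed by the Gronwall exponent for $\log\|u\|_{H^3}$.

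The dependence of the constants on $T_\ast$ that worries you is also present in the paper, whose proof explicitly lets them depend on $\|u_0\|_{H^3}$ and $T_\ast$. So your fallback is unnecessary. As stated it would also not quite work: passing from $d/dt$ to $d/d\tau$ divides the constant term in the $\Phi$-inequality by $\|\omega\|_{L^\infty}$, so $d\Phi/d\tau\le C\Phi$ does not follow directly.

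Your Step~3 is the paper's contradiction argument. You write it slightly more cleanly, directly in terms of $\|\omega\|_{L^\infty}$ rather than its running supremum.
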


\begin{Remark}
\label{R00}
{\rm
We note that the proof shows that the constant in $C$ can be made arbitrarily close to the
constant in the inequality \eqref{EQ105} below.
}
\end{Remark}

\begin{Remark}
\label{R01}
{\rm
We also note that Theorem~\ref{T01} holds for solutions in the regularity class~\eqref{EQaa} with $r > 5/ 2$, and the constant appearing in~\eqref{EQ105} depends on~$r$.
}
\end{Remark}

\begin{Remark}
\label{R03}
{\rm
The statement and proof apply directly to the case of a bounded smooth domain with impermeable boundary condition, using
the inequality for $\Vert \nabla u\Vert_{L^{\infty}(\Omega)}$ in~\cite{F,SY}.
}
\end{Remark}

The second theorem gives pointwise-in-time lower bounds on~$\|D\omega\|_{L^\infty}$. 

\begin{Theorem}
\label{T02}
Assume the hypothesis of Theorem~\ref{T01} for $\Omega = \mathbb{R}^3$  and $r > 7/2$.
Then, we have
  \begin{equation}
   \|D\omega(t)\|_{L^\infty}
   \geq
   \frac{1}{C(T_\ast -t)_{}^{7/5}}
   \comma
   t \in [0,T_\ast)
   ,
   \label{EQ101c}
  \end{equation}
where $C$ depends on~$\|u_0\|_{L^2}$.
When $\Omega = \mathbb{T}^3$, the lower bound~\eqref{EQ101c} holds for $t$ sufficiently close to~$T_\ast$. 
\end{Theorem}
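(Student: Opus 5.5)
We follow the strategy outlined in the introduction: a Riccati inequality for $X(t)=\|D^{1/2}\omega(t)\|_{L^8}$, closed by an interpolation inequality that bounds $X$ in terms of $\|D\omega\|_{L^\infty}$ and the conserved energy $\|u\|_{L^2}=\|u_0\|_{L^2}$.

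The first step is the interpolation lemma. Scaling fixes the exponents. The quantity $D^{1/2}\omega\sim D^{3/2}u$ in $L^8$ has homogeneity $3/2+3/8=15/8$ relative to $\|u\|_{L^2}$ (homogeneity $0$). The quantity $\|D\omega\|_{L^\infty}=\|D^2u\|_{L^\infty}$ has homogeneity $2+3/2=7/2$, also measured against $L^2$. Hence we expect
\begin{equation*}
\|D^{1/2}\omega\|_{L^8}\les \|u\|_{L^2}^{1-\theta}\|D\omega\|_{L^\infty}^{\theta},\qquad \theta=\frac{15/8}{7/2}=\frac{15}{28}.
\end{equation*}
We would prove this by a Littlewood--Paley or Gagliardo--Nirenberg argument. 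Low frequencies are bounded by $\|u\|_{L^2}$ via Bernstein, and high frequencies by $\|D^2u\|_{L^\infty}$ together with interpolation of $L^8$ between $L^2$ and $L^\infty$. We then optimize the frequency cutoff. On $\TT^3$ the mean-zero projection has to be handled, which we expect is why the torus statement only holds near $T_\ast$.

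The second step is the Riccati inequality. Apply $\Lambda^{1/2}$ to the vorticity equation $\omega_t+u\cdot\nabla\omega=\omega\cdot\nabla u$ and test against $|\Lambda^{1/2}\omega|^6\Lambda^{1/2}\omega$. Incompressibility kills the main transport term. What remains are the commutator $[\Lambda^{1/2},u\cdot\nabla]\omega$ and the stretching term $\Lambda^{1/2}(\omega\cdot\nabla u)$, which we estimate by Kato--Ponce commutator and product bounds in $L^8$. Each of these is cubic-type in $D^{3/2}u$. Using Sobolev and Gagliardo--Nirenberg together with energy conservation, we aim to close an inequality of the form $\dot X\les \|u_0\|_{L^2}^{a}X^{1+\beta}$, with exponents again dictated by scaling. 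The time derivative has homogeneity $5/2$, since $\partial_t\sim\|\nabla u\|\sim$ homogeneity $5/2$. So $\dot X\sim X^{1+\beta}$ requires $15\beta/8=5/2$, that is $\beta=4/3$.

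This step is the main obstacle. A naive bound gives $\dot X\les\|\nabla u\|_{L^\infty}X$, but $\|\nabla u\|_{L^\infty}$ is not controlled by $X$ alone without a log loss, and we must avoid logs to reach the sharp exponent. We would first try bounding $\|\nabla u\|_{L^\infty}\les\|u\|_{L^2}^{1-\eta}\|D^{1/2}\omega\|_{L^8}^{\eta}$. This holds because $\nabla u$ has homogeneity $5/2<15/8$ only after adding $L^2$ low frequencies, giving $\eta=(5/2)/(15/8)=4/3$. That is a genuine Gagliardo--Nirenberg inequality, since $D^{3/2}u\in L^8$ with $3/2-3/8>1$ embeds into $C^1$ above the $L^2$ scale. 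It yields $\dot X\les\|u_0\|^{-1/3}_{L^2}X^{7/3}$ with no log loss.

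Finally, we integrate the Riccati inequality. Since the solution leaves the class at $T_\ast$, the BKM criterion (via $\|\nabla u\|_{L^\infty}\les X^{4/3}$) forces $X(s)\to\infty$ as $s\to T_\ast^-$. Integrating $\dot X\le CX^{7/3}$ from $t$ to $T_\ast$ gives $X(t)^{-4/3}\le C(T_\ast-t)$. Combining with the interpolation lemma,
\begin{equation*}
\|D\omega(t)\|_{L^\infty}\ges X(t)^{28/15}\ges (T_\ast-t)^{-\frac34\cdot\frac{28}{15}}=(T_\ast-t)^{-7/5},
\end{equation*}
with constants depending only on $\|u_0\|_{L^2}$. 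This gives \eqref{EQ101c}. The assumption $r>7/2$ ensures that $D\omega\in L^\infty$ and that the energy computation is justified.
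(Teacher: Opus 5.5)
\textbf{The steps as written are false, although the approach is the paper's.} Your architecture matches the paper: a Riccati inequality for $X=\|D^{1/2}\omega\|_{L^8}$, obtained from $\dot X\lesssim\|\nabla u\|_{L^\infty}X$ (Kato--Ponce plus Biot--Savart), a Gagliardo--Nirenberg bound of $\|\nabla u\|_{L^\infty}$ by $\|u\|_{L^2}$ and $X$, and an interpolation lemma bounding $X$ by $\|u\|_{L^2}$ and $\|D\omega\|_{L^\infty}$. However, your scaling bookkeeping is wrong, and as a result both key inequalities you state are false.

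Under $u\mapsto u(\lambda\cdot)$ on $\mathbb{R}^3$, $\|D^{1/2}\omega\|_{L^8}$ scales like $\lambda^{3/2-3/8}=\lambda^{9/8}$. Since $\|u\|_{L^2}$ scales like $\lambda^{-3/2}$, the homogeneity of $X$ relative to $\|u\|_{L^2}$ is $9/8+3/2=21/8$, not $15/8$. This has two consequences.
\begin{itemize}
\item The interpolation exponent is $\theta=(21/8)/(7/2)=3/4$, exactly as in Lemma~\ref{L01}. With your $\theta=15/28$, the left side scales like $\lambda^{9/8}$ and the right side like $\lambda^{3/8}$, so the inequality fails as $\lambda\to\infty$.
\item The Gagliardo--Nirenberg exponent is $\eta=(5/2)/(21/8)=20/21$. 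Your bound $\|\nabla u\|_{L^\infty}\lesssim\|u\|_{L^2}^{-1/3}X^{4/3}$ has left side $\sim\lambda$ and right side $\sim\lambda^2$, so it fails as $\lambda\to0$. It is not a Gagliardo--Nirenberg inequality at all: your own observation that $W^{3/2,8}$ embeds into $C^1$ places $\|\nabla u\|_{L^\infty}$ strictly between $\|u\|_{L^2}$ and $X$ in the scale, which forces $\eta<1$.
\end{itemize}
The correct Riccati inequality is $\dot X\lesssim X^{1+20/21}$. This gives $X(t)\gtrsim (T_\ast-t)^{-21/20}$, and then $\|D\omega\|_{L^\infty}\gtrsim X^{4/3}\gtrsim (T_\ast-t)^{-7/5}$. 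Your final exponent came out right only because the same mistake enters twice and cancels: $\tfrac34\cdot\tfrac{28}{15}=\tfrac{21}{20}\cdot\tfrac43=\tfrac75$. The value $7/5$ is forced by the homogeneities $7/2$ of $\|D\omega\|_{L^\infty}$ and $5/2$ of $1/t$, whatever value one assigns to $X$.

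\textbf{Your proof of the lemma does not close at the correct exponent.} There are two problems.
\begin{itemize}
\item $\|D\omega\|_{L^\infty}$ is not $\|D^2u\|_{L^\infty}$, because Riesz transforms are unbounded on $L^\infty$. A lemma with $\|D^2u\|_{L^\infty}$ on the right would only give a lower bound for $\|D^2u\|_{L^\infty}$, which is weaker than \eqref{EQ101c}.
\item $\theta=3/4$ is the endpoint case: $\tfrac18=\tfrac14\cdot\tfrac12+\tfrac34\cdot 0$ and $\tfrac32=\tfrac34\cdot 2$. Here Bernstein and H\"older give, for each high-frequency block,
$\|P_N D^{1/2}\omega\|_{L^8}\lesssim (N^{3/2}\|u\|_{L^2})^{1/4}(N^{-1/2}\|D\omega\|_{L^\infty})^{3/4}=\|u\|_{L^2}^{1/4}\|D\omega\|_{L^\infty}^{3/4}$
uniformly in $N$. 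These bounds are not summable over $N\ge N_0$, and no choice of cutoff closes the argument.
\end{itemize}
The paper avoids both issues by interpolating $\omega$ itself with the fractional Gagliardo--Nirenberg inequality of Brezis--Mironescu, $\|\omega\|_{W^{1/2,8}}\lesssim\|\omega\|_{L^4}^{1/2}\|\omega\|_{W^{1,\infty}}^{1/2}$. It then bounds $\|\omega\|_{L^4}\lesssim\|u\|_{L^2}^{1/3}\|D^{1/2}\omega\|_{L^8}^{2/3}$ by a chain of interpolations that uses Calder\'on--Zygmund only at finite $p$, and finishes by rescaling.

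\textbf{The torus case is missing.} Your treatment of $\mathbb{T}^3$ is only a guess. In the paper, the lemma on $\mathbb{T}^3$ acquires an additive $\|u\|_{L^2}$ (via a cutoff and a commutator estimate), and the Riccati inequality becomes $\dot X\lesssim (1+X)^{1+\alpha}$. The resulting $-C$ in $\|D\omega\|_{L^\infty}\ge C^{-1}(T_\ast-t)^{-7/5}-C$ is why the bound holds only near $T_\ast$.
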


We also give pointwise-in-time lower bounds on the blow-up of $\|D^k \omega\|_{L^\infty}$ for $k \geq 2$.

\begin{Theorem}
\label{T03}
Assume the hypothesis of Theorem~\ref{T01} for $\Omega = \mathbb{R}^3$ and $r > 5/2 + m$, 
where $m\in\{3,4,\ldots\}$. Then, we have
  \begin{equation}
    \|D^k \omega(t)\|_{L^\infty} 
    \geq 
    \frac{1}{C(T_\ast -t)^{2k/5+1}}
    ,
    \quad\quad
    t \in [0,T_\ast)
    \label{EQ07a}
  \end{equation}
for $k = 2,\dots, m$, where $C$ depends on~$\|u_0\|_{L^2}$. 
When $\Omega = \mathbb{T}^3$, the lower bound~\eqref{EQ07a} holds for $t$ sufficiently close to~$T_\ast$.
\end{Theorem}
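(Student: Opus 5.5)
Following the approach the introduction describes for Theorem~\ref{T02}, I would combine a Riccati-type lower bound for an intermediate quantity $X(t)$ with an interpolation inequality that bounds $X$ by $\|D^k\omega\|_{L^\infty}$ and the conserved energy $\|u_0\|_{L^2}$. The scaling fixes the exponents. Under $u_\lambda(x,t)=\lambda^{a}u(\lambda x,\lambda^{1+a}t)$, the $L^2$ norm is invariant when $a=3/2$, so time scales like $\lambda^{-5/2}$. The quantity $\|D^k\omega\|_{L^\infty}$ scales like $\lambda^{a+k+1}=\lambda^{k+5/2}$. Hence a bound of the form $\|D^k\omega\|_{L^\infty}\ge c\,(T_\ast-t)^{-\theta}$, with $c$ depending only on $\|u_0\|_{L^2}$, forces $\theta=(k+5/2)/(5/2)=2k/5+1$. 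This is exactly the exponent in~\eqref{EQ07a}.

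\textbf{Step 1.} Choose a critical-type norm $X=\|D^{s}\omega\|_{L^p}$ (or $\|D^{s}u\|_{L^p}$) satisfying three conditions: it is controlled by the regularity $r>5/2+m$; it blows up at $T_\ast$; and its energy estimate closes in Riccati form $\frac{d}{dt}X\le C X^{\beta}$. The natural generalization of the $L^8$ norm of $D^{1/2}\omega$ used for $k=1$ is to keep $X=\|D^{1/2}\omega\|_{L^8}$ for all $k$. For this $X$ the Riccati inequality is already available from the proof of Theorem~\ref{T02}. Blow-up of $X$ follows because boundedness of $X$ controls $\|\omega\|_{L^\infty}$ (together with the $L^2$ bound), via Sobolev embedding of $W^{1/2,8}$ into $L^\infty$ for $\omega$. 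The BKM criterion then gives persistence of $H^r$. Integrating the Riccati inequality from $t$ to $T_\ast$ gives $X(t)\ge c(T_\ast-t)^{-1/(\beta-1)}$ for all $t<T_\ast$, with $c$ depending only on universal constants.

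\textbf{Step 2.} The main step is the interpolation inequality, a $k$-th order analogue of the lemma for Theorem~\ref{T02}:
\[
\|D^{1/2}\omega\|_{L^8}\le C\|u\|_{L^2}^{1-\theta_k}\|D^k\omega\|_{L^\infty}^{\theta_k}.
\]
I would prove it by a Littlewood--Paley splitting at frequency $N$. For low frequencies, Bernstein's inequality bounds the contribution by $N^{3/2+1/2+3/8}\|u\|_{L^2}$ up to constants; more precisely, $\|P_{\le N}D^{3/2}u\|_{L^8}\lesssim N^{3/2+3/2-3/8}\|u\|_{L^2}$. For high frequencies, the contribution is bounded by $N^{1/2-k}\|D^k\omega\|_{L^\infty}$ times an $L^\infty\to L^8$ loss. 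This loss is the main obstacle, since $L^\infty$ does not embed into $L^8$ on $\mathbb{R}^3$. To handle it, I would interpolate $L^8$ between $L^2$ and $L^\infty$ on the high-frequency part: $\|f\|_{L^8}\le\|f\|_{L^2}^{1/4}\|f\|_{L^\infty}^{3/4}$. The $L^2$ piece is bounded using energy and Bernstein, and the $L^\infty$ piece by $\|D^k\omega\|_{L^\infty}N^{-k+1/2}$. Optimizing in $N$ yields $\theta_k$. Scaling alone determines $\theta_k$: it equals $\frac{(1/2+1+3/2-3/8)}{(k+5/2)}$ relative to the $L^2$-invariant weights. Composing with Step 1 then automatically produces the exponent $2k/5+1$, because every step is scaling-consistent with $L^2$.

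\textbf{Step 3.} For $\mathbb{T}^3$, the low-frequency and mean contributions add lower-order terms of the form $C\|u_0\|_{L^2}$. These are dominated once $T_\ast-t$ is small, which gives the bound only near $T_\ast$, as stated in the theorem. The requirement $r>5/2+m$ guarantees that $D^k\omega\in L^\infty$ for $k\le m$ on $[0,T_\ast)$, so all quantities are finite. Beyond the interpolation step, the remaining check is that the constant in the Riccati inequality depends only on $\|u_0\|_{L^2}$ and is independent of the higher norms.
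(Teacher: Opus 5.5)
Your strategy differs from the paper's and is sound in outline, but Step~2 fails as written.

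\textbf{How the routes differ.} The paper does not reuse $\|D^{1/2}\omega\|_{L^8}$. It derives a new Riccati inequality at each order for $\|D^{k-1}\omega\|_{L^{2(k+1)}}$, with $\alpha=5/(2k+3-6/p)$. It then passes to $\|D^k\omega\|_{L^\infty}$ through the chain of integer-order Gagliardo--Nirenberg inequalities in Lemma~\ref{L03}. You instead keep the $k=1$ lower bound $\|D^{1/2}\omega\|_{L^8}\gtrsim(T_\ast-t)^{-21/20}$ from the proof of Theorem~\ref{T02}, and put all the work into one interpolation inequality with $\theta_k=21/(8k+20)$. Your exponent bookkeeping is right, since $(21/20)/\theta_k=2k/5+1$.

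\textbf{The gap.} You apply $\|f\|_{L^8}\le\|f\|_{L^2}^{1/4}\|f\|_{L^\infty}^{3/4}$ to the whole tail $f=P_{>N}D^{1/2}\omega$, and claim the $L^2$ factor is controlled by energy and Bernstein. But $\|P_{>N}D^{1/2}\omega\|_{L^2}\sim\|P_{>N}D^{3/2}u\|_{L^2}$, and Bernstein gives upper bounds only at low frequency. This quantity is not bounded by any function of $\|u\|_{L^2}$, $\|D^k\omega\|_{L^\infty}$ and $N$. Take a divergence-free wave packet at frequency $2^J$ spread over volume $V$, normalized to $\|u\|_{L^2}=1$ with $\|D^k\omega\|_{L^\infty}$ fixed. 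As $V\to\infty$ you get $2^J\to\infty$ and $\|P_{>N}D^{3/2}u\|_{L^2}\sim2^{3J/2}\to\infty$.

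\textbf{The fix.} Interpolate on each dyadic block separately. For $2^j>N$:
\begin{itemize}
\item $\|P_jD^{1/2}\omega\|_{L^2}\lesssim2^{3j/2}\|u\|_{L^2}$;
\item $\|P_jD^{1/2}\omega\|_{L^\infty}\lesssim2^{j(1/2-k)}\|D^k\omega\|_{L^\infty}$, since the frequency-localized multipliers have $L^1$ kernels.
\end{itemize}
Together these give $\|P_jD^{1/2}\omega\|_{L^8}\lesssim2^{-3(k-1)j/4}\|u\|_{L^2}^{1/4}\|D^k\omega\|_{L^\infty}^{3/4}$. This sums over $2^j>N$ precisely because $k\ge2$; at $k=1$ the block bounds stop decaying, which is why Lemma~\ref{L01} needs a different argument. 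Balance this against the low-frequency bound $N^{21/8}\|u\|_{L^2}$ by choosing $N=(\|D^k\omega\|_{L^\infty}/\|u\|_{L^2})^{2/(2k+5)}$. This yields your $\theta_k$, and the rest of your argument goes through, including $\mathbb{T}^3$ with an additive $\|u\|_{L^2}$ term.

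\textbf{What each route buys.} Your repaired route makes Theorem~\ref{T03} a corollary of Theorem~\ref{T02} plus a pure functional inequality, with no higher-order commutator estimates for $D^{k-1}\omega$. The same block argument even gives $\|D\omega\|_{L^\infty}\lesssim\|u\|_{L^2}^{(2k-2)/(2k+5)}\|D^k\omega\|_{L^\infty}^{7/(2k+5)}$, so you could pivot directly on \eqref{EQ101c}. The paper's route avoids Littlewood--Paley entirely and yields the intermediate lower bound on $\|D^{k-1}\omega\|_{L^{2(k+1)}}$.

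One minor correction: the Riccati constant in Step~1 depends on $\|u_0\|_{L^2}$, not only on universal constants.
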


Finally, we state the blow-up rate
for the stress tensor $\sigma=\nabla u+ (\nabla u)^{\text T}$
addressed in~\cite{P}.

\begin{Theorem}
\label{T04}
Assume the hypothesis of Theorem~\ref{T01} for $\Omega = \mathbb{R}^3$ or $\mathbb{T}^3$ for $r > 7/2$.
Then, we have
  \begin{equation}
   \|D\sigma(t)\|_{L^\infty}
   \geq
   \frac{1}{C(T_\ast -t)_{}^{7/5} }
   \llabel{EQ07}
  \end{equation}
for all $t$ sufficiently close to $T_\ast$,
where $C$ depends on~$\Vert u_0\Vert_{L^2}$.
\end{Theorem}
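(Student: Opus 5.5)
The plan is to reduce Theorem~\ref{T04} to Theorem~\ref{T02}. Concretely, I want to show that $\|D\omega\|_{L^\infty}$ is controlled by $\|D\sigma\|_{L^\infty}$ up to a lower-order, energy-controlled term, and then apply the lower bound~\eqref{EQ101c}.

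\textbf{Step 1: pointwise identity.} Since $\div u=0$, the components $\omega_{ij}=\partial_j u_i-\partial_i u_j$ and $\sigma_{ij}=\partial_j u_i+\partial_i u_j$ satisfy
\begin{equation*}
\partial_k\omega_{ij}=\partial_j\sigma_{ik}-\partial_i\sigma_{jk} ,
\end{equation*}
because $\partial_j\partial_k u_i-\partial_i\partial_k u_j=\partial_j(\partial_k u_i+\partial_i u_k)-\partial_i(\partial_k u_j+\partial_j u_k)$. Consequently
\begin{equation*}
\|D\omega(t)\|_{L^\infty}\le C_0\|D\sigma(t)\|_{L^\infty}
\end{equation*}
pointwise in time, with an absolute constant $C_0$, and this holds on both $\mathbb{R}^3$ and $\mathbb{T}^3$. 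The identity requires no singular integrals, so no BMO/log loss appears. The regularity $r>7/2$ guarantees that $D\omega,D\sigma\in L^\infty$ for $t<T_\ast$ by Sobolev embedding, so all quantities are finite before blow-up.

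\textbf{Step 2: apply Theorem~\ref{T02}.} Combining Step~1 with~\eqref{EQ101c} gives
\begin{equation*}
\|D\sigma(t)\|_{L^\infty}\ge \frac{1}{C_0}\|D\omega(t)\|_{L^\infty}\ge \frac{1}{C_0C(T_\ast-t)^{7/5}} .
\end{equation*}
On $\mathbb{R}^3$ this holds for all $t\in[0,T_\ast)$, and on $\mathbb{T}^3$ it holds for $t$ close to $T_\ast$, as Theorem~\ref{T02} provides. The constant depends only on $\|u_0\|_{L^2}$, as claimed. This also explains why the statement is phrased "for $t$ sufficiently close to $T_\ast$": that restriction covers the torus case uniformly.

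\textbf{Main obstacle.} The only delicate point is whether Theorem~\ref{T02} on $\mathbb{T}^3$ is stated with the same dependence of constants; I take that as given. If one instead wanted an independent proof mirroring Theorem~\ref{T02}, the fallback would be to rerun the $L^8$ Riccati argument for $D^{1/2}\omega$. In that approach the interpolation lemma would close with $\|D\sigma\|_{L^\infty}$ in place of $\|D\omega\|_{L^\infty}$, using the algebraic identity above.
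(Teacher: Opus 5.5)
Your proof is correct, and it takes a different, more elementary route than the paper.

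\textbf{The paper's route.} The paper does not reduce Theorem~\ref{T04} to Theorem~\ref{T02}. It re-derives an interpolation inequality
$\|D^{1/2}\omega\|_{L^8}\lesssim\|u\|_{L^2}^{1/4}\|D\sigma\|_{L^\infty}^{3/4}$
in three steps:
\begin{enumerate}
\item apply the fractional Gagliardo--Nirenberg inequality to $\sigma$;
\item pass from $D^{1/2}\sigma$ to $D^{1/2}\omega$ in $L^8$ via Calder\'on--Zygmund;
\item repeat the argument of Lemma~\ref{L01}.
\end{enumerate}
It then combines this with the $L^8$ lower bound \eqref{EQ210} coming from the Riccati argument. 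On $\mathbb{T}^3$ the lower-order terms are only sketched.

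\textbf{Your route.} Your identity $\partial_k\omega_{ij}=\partial_j\sigma_{ik}-\partial_i\sigma_{jk}$ (which in fact does not need $\div u=0$) gives $\|D\omega\|_{L^\infty}\le C_0\|D\sigma\|_{L^\infty}$ pointwise in time. This bypasses singular integrals entirely. That matters, because Calder\'on--Zygmund operators are unbounded on $L^\infty$ and could not be used directly at that level.

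\textbf{What each approach buys.} In your approach, Theorem~\ref{T04} is an immediate corollary of Theorem~\ref{T02}, on both $\mathbb{R}^3$ and $\mathbb{T}^3$, with the same constant dependence. On $\mathbb{R}^3$ it even holds for all $t\in[0,T_\ast)$. Differentiating the identity gives $|D^k\omega|\lesssim|D^k\sigma|$ pointwise, so the paper's closing remark about $\|D^k\sigma\|_{L^\infty}$ follows at once from Theorem~\ref{T03}. The paper's route yields only the intermediate interpolation inequality in terms of $\|D\sigma\|_{L^\infty}$. Your argument shows that inequality is unnecessary for the theorem, and it also follows directly from Lemma~\ref{L01} combined with your identity.
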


Analogous bounds to \eqref{EQ07a} also apply for
$\|D^k \sigma(t)\|_{L^\infty}$ for any $k\ge2$.
%, using the idea of the proof of Theorem~\ref{T04}

\section{Lower bounds on the blow-up rate of $\|\omega\|_{L^\infty}$}\label{sec02}
In this section we prove Theorem~\ref{T01}. 
\noindent 

\begin{proof}[Proof of Theorem~\ref{T01}]
Let $t \in [0,T_\ast)$. 
We begin with the vorticity equation corresponding to~\eqref{EQ01},
  \begin{equation}
   \partial_t \omega
   + u\cdot \nabla \omega
   =
   \omega \cdot \nabla u
   .
   \label{EQ103}
  \end{equation}
Following Lagrangian trajectories, we obtain 
\begin{equation}
   \frac{d}{dt}\|\omega\|_{L^\infty}
   \les 
   \|D u\|_{L^\infty} \|\omega\|_{L^\infty}
   .
   \label{EQ104}
  \end{equation}
In conjunction with the bound, 
  \begin{equation}
   \|D u \|_{L^\infty} 
   \leq
   \widetilde{C}(1 + \|\omega\|_{L^\infty}(1+ \log_+\|u\|_{H^3}))
   ,
   \label{EQ105}
  \end{equation}
see~\cite{KT},
the estimate~\eqref{EQ104} becomes
  \begin{equation}
   \frac{d}{dt} \|\omega\|_{L^\infty}
   \les
   \|\omega\|_{L^\infty}
   + \|\omega\|_{L^\infty}^2 (1 + \log_+\|u\|_{H^3})
   ,
   \label{EQ106}
  \end{equation}
where we denote $\log_+ a = \max\{\log a,0\}$ for $a>0$.
Next, we bound $\|u\|_{H^3}$ in terms of~$\|\omega\|_{L^\infty}$. 
From~\eqref{EQ01}, we have
  \begin{equation}
   \frac{d}{dt} \|u\|_{H^3}
   \les
   \|Du\|_{L^\infty}\|u\|_{H^3}
   \les 
   (1 + \|\omega\|_{L^\infty} (1 + \log_+\|u\|_{H^3}))\|u\|_{H^3}
   .
   \llabel{EQ107}
  \end{equation}
Hence,
  \begin{equation}
   \left(\frac{d}{dt}\right)^+ (1 + \log_+ \|u\|_{H^3})
   \les
   1 + \|\omega\|_{L^\infty} (1 + \log_+ \|u\|_{H^3})
   ,
   \llabel{EQ108}
  \end{equation}
where $(d/dt)^+$ denotes the upper right derivative. 
Thus, Gronwall's inequality gives 
  \begin{equation}
   1 + \log_+\|u(t)\|_{H^3}
   \les
   \exp\left(\widetilde{C}\int_0^t \|\omega(s)\|_{L^\infty} \, ds\right)
    \comma t<T_*
    ,
   \llabel{EQ109}
  \end{equation}
where $\widetilde{C}$ denotes the constant from~\eqref{EQ105}.  
Substituting this into~\eqref{EQ106}, we arrive at 
  \begin{equation}
   \frac{d}{dt}\|\omega\|_{L^\infty}
   \les
   \|\omega\|_{L^\infty}
   + \|\omega\|_{L^\infty}^2 \exp\left(\widetilde{C}\int_0^t \|\omega\|_{L^\infty}\, ds\right)
   ,
   \inon{$t< T_\ast$,}
   \label{EQ110}
  \end{equation}
where the implicit constant depends on $\|u_0\|_{H^3}$ and~$T_\ast$. 

For brevity, we denote $x(t) = \|\omega(t) \|_{L^\infty}$ and rewrite~\eqref{EQ110} as
  \begin{align}
   \begin{split}
   \dot{x} 
   &\les
   x + x^2 \exp\left(\int_0^t x \, ds \right)
   ,
   \\
   x(0) &= \|\omega(0)\|_{L^\infty}
   ,
   \end{split}
   \label{EQ111}
  \end{align}
  where we assume $\widetilde{C} =1$ with no loss of generality, 
otherwise, we can include it in the following reparameterization~\eqref{EQ112}.
Next, we convert~\eqref{EQ111} into an inequality in terms of the accumulation of vorticity, which we denote as
  \begin{equation}
   \tau(t)
   =
   \int_0^t x(s)\, ds
   ,
   \inon{$t < T_\ast$.}
   \label{EQ112}
  \end{equation}
Since $x(t) = \|\omega(t)\|_{L^\infty}$ has positive initial data, we have $x(t) > 0$ for all $t \in [0,T_\ast)$. 
Hence, $\tau\colon[0,T_\ast)\to [0,\infty)$ is a $C^1$ diffeomorphism. 
This enables us to parameterize the time as $t = t(\tau)$, where
  \begin{equation}
   \lim_{\tau \to \infty} t(\tau) = T_\ast
   \andand
   \lim_{\tau \to 0^+} t(\tau) = 0
   .
   \label{EQ116b}
  \end{equation}
Denoting $X(\tau) = x(t(\tau))$,
we have
  \begin{equation}
   \frac{d\tau}{dt} =  x(t)
   \andand
   \frac{dt}{d\tau} = \frac{1}{X(\tau)}
   \llabel{EQ114}
  \end{equation}
with $X(0) = x(0)$.
Therefore, the differential inequality~\eqref{EQ111} can be recast as
  \begin{equation}
   \frac{d X}{d\tau} 
   \les
   1 + X e^\tau
   ,
   \llabel{EQ115}
  \end{equation}
and applying Gronwall's inequality on the interval $[0,\tau]$ gives
  \begin{equation}
   X(\tau) 
   \leq 
   C(1+ \tau) e^{Ce^\tau}
   ,
   \inon{$\tau \geq 0$}
   ,
   \llabel{EQ116}
  \end{equation}
where $C \geq 1$ denotes a constant 
depending on $\|u_0\|_{H^3}$ and~$T_\ast$.
Next, we recover an explicit relationship between the quantity $T_\ast -t$ and the accumulation~$\tau$.
For $\tau_0 \geq 0$, we denote $t_0 = t(\tau_0)$. 
Recalling~\eqref{EQ116b}, we have 
  \begin{equation}
   T_\ast - t_0
   = 
   \int_{\tau_0}^\infty \frac{dt}{d\tau} \, d\tau
   =
   \int_{\tau_0}^\infty \frac{1}{X(\tau)}\, d\tau
   \geq
   C^{-1}\int_{\tau_0}^\infty (1+\tau)^{-1}e^{-Ce^{\tau}} \, d\tau
   \geq
   \frac{1}{C(1+\tau_0) e^{Ce^{\tau_0 + 1}}}
   .
   \llabel{EQ117}
  \end{equation}
To obtain the last inequality, we estimate the integral bounds with $[\tau_0,\tau_0+1]$
and bound the integrand from below with its minimum on $[\tau_0,\tau_0+1]$.
Next, we drop the subscripts, invert the inequality, and bound $2+ \tau \leq e^{e^{\tau+1}}$ for $\tau \geq 0$, arriving at
\begin{equation}
   e^{(1+C)e^{\tau+1}}
   \geq 
   \frac{1}{C(T_\ast - t)}
   ,
   \inon{$\tau \geq 0$.}
   \llabel{EQ118}
  \end{equation}
We then apply a logarithm, which leads to
  \begin{equation}
   (1+C)e^{\tau+1}
   \geq
   \log\left( \frac{1}{C(T_\ast -t)}\right)
   .
   \llabel{EQ119}
  \end{equation}
Taking a second logarithm gives
  \begin{equation}
   \tau 
   \geq 
   \log\log \left( \frac{1}{C(T_\ast -t)} \right)
   -\log(1+C)
   -1
   ,
   \quad\quad
   T_\ast - t
   \leq 
   \frac{1}{C}
   .
   \llabel{EQ120}
  \end{equation}
Thus, we have obtained
\begin{equation}
   \int_0^{t} \|\omega(s) \|_{L^\infty}\, ds
   \geq 
   \frac{1}{{C_1}}\log\log\left( \frac{1}{C_2(T_\ast -t)}\right)
   - C_3
   ,
   \quad\quad
   T_\ast -t 
   <
   \frac{1}{C_4}
   ,
   \label{EQ02}
  \end{equation}
where $C_1, C_2, C_3, C_4$ are positive constants.
The right-hand side of \eqref{EQ02}
may be rewritten as
  \begin{equation}
   \frac{1}{C_1}
   \log \log \frac{1}{T_*-t}
   +
   \frac{1}{C_1}
   \log\left(1+ \frac{\log C_2}{\log (1/(T_*-t))}\right)
   .
   \llabel{EQ04}
  \end{equation}
Note that the second term converges to $0$ as $t\to T_{*}$, and thus,
by possibly increasing $C_4$, we derive
\begin{equation}
   \int_0^{t} \|\omega(s) \|_{L^\infty}\, ds
   \geq 
   \frac{1}{{C_1}}\log\log\left( \frac{1}{T_\ast -t}\right)
   - C_3
   ,
   \quad\quad
   T_\ast -t 
   <
   \frac{1}{C_4}
   .
   \llabel{EQ05}
  \end{equation}
Finally, by increasing $C_1$ and $C_4$, we may also,
without loss of generality, set $C_3=0$, and 
\eqref{EQ101} is proven.

We now prove~\eqref{EQ101a}. 
Towards a contradiction, we suppose
  \begin{equation}
   \limsup_{t\to T_\ast^-}\, (T_\ast -t) \log\left( \frac{1}{T_\ast -t}\right) \|\omega(t)\|_{L^\infty} 
   \leq
   \delta
   ,
   \label{EQ122}
  \end{equation}
for some $\delta \in [0,1)$
and claim that this leads to a contradiction if $\delta$ is sufficiently small.
Denote $y(t) = \sup_{s\leq t} \|\omega(s)\|_{L^\infty}$.
By monotonicity, the equation~\eqref{EQ122} implies
  \begin{equation}
   y(t)
   \leq \frac{2\delta}{(T_\ast -t) \log \left( \frac{1}{T_\ast -t}\right)},
   \inon{$t_0 \leq t < T_\ast$,}
   \llabel{EQ123}
  \end{equation}
where $t_0$ is a fixed number chosen such that~\eqref{EQ101} holds for all $t \geq t_0$.
Thus,
  \begin{align}
   \begin{split}
    \int_0^t y\, ds
    &\leq
    \int_0^{t_0} y\, ds
    +
    2\delta\int_{t_0}^t \frac{1}{(T_\ast -s) \log \left(\frac{1}{T_\ast -s}\right)}\,ds
    \\&=
    \int_0^{t_0} y\, ds
    +2\delta \log\log \left( \frac{1}{T_\ast -t}\right)    
    - 2\delta \log\log\left( \frac{1}{T_\ast-t_0}\right)
    .
   \label{EQ125}
   \end{split}
  \end{align}
However, from~\eqref{EQ101}, we have the lower bound,
  \begin{equation}
   \int_0^t y\, ds
   \geq 
   \frac{1}{C}
   \log\log \left( \frac{1}{T_\ast -t}\right)
   .
   \label{EQ124}
  \end{equation}
Upon combining~\eqref{EQ125} and~\eqref{EQ124}, we infer
  \begin{equation}
   \left(
    \frac{1}{C} - 2\delta
   \right)
   \log \log \frac{1}{T_*-t}
   \leq
   \int_{0}^{t_0} y\,ds
   - 2\delta \log \log \frac{1}{T_*-t_0}
   ,
   \llabel{EQ06}
  \end{equation}
for $t$ sufficiently close to $T_*$, which is a contradiction if
$\delta>0$ is sufficiently small.
\end{proof}

\section{Lower bounds on the blow-up rate of $\|D^k\omega\|_{L^\infty}$}\label{sec03}
First, we prove lower bounds on~$\|D\omega\|_{L^\infty}$.
The argument relies on the following lemma. 

\begin{Lemma}
\label{L01}
Assume the hypothesis of Theorem~\ref{T02}. 
Then, in the case that $\Omega = \mathbb{R}^3$,
  \begin{equation}
   \|D^{1/2} \omega\|_{L^8} 
   \les
   \|u\|_{L^2}^{1/4}\|D\omega\|_{L^\infty}^{3/4}
   ,
   \label{EQ300}
  \end{equation}
while if $\Omega = \mathbb{T}^3$, we have
  \begin{equation}
   \|D^{1/2} \omega\|_{L^8}
   \les
   \|u\|_{L^2}^{1/4}\|D \omega\|_{L^\infty}^{3/4} 
   + \|u\|_{L^2}
   .
   \label{EQ301}
  \end{equation}
\end{Lemma}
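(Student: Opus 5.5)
The plan is to reduce \eqref{EQ300} to a refined Gagliardo--Nirenberg (Chemin--Gérard--Meyer type) inequality, with an $L^2$-based negative-regularity norm in one factor and a positive-regularity $L^\infty$-based Besov norm in the other. Set $f=D^{1/2}\omega$, where $D^{s}=(-\Delta)^{s/2}$. Since $\omega=\curl u$, we have $\|f\|_{\dot H^{-3/2}}\les \|u\|_{L^2}$. Since $\dot B^{1}_{\infty,\infty}\supset \dot W^{1,\infty}$ and $D^{1/2}$ maps $\dot B^{1}_{\infty,\infty}$ to $\dot B^{1/2}_{\infty,\infty}$, we also have $\|f\|_{\dot B^{1/2}_{\infty,\infty}}\les \|D\omega\|_{L^\infty}$. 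The exponents are consistent with the statement, because $-\tfrac32\cdot\tfrac14+\tfrac12\cdot\tfrac34=0$ and $\tfrac14=2/8$. It therefore suffices to prove
\begin{equation*}
\|f\|_{L^8}^8\les \|f\|_{\dot H^{-3/2}}^2\,\|f\|_{\dot B^{1/2}_{\infty,\infty}}^6 .
\end{equation*}

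To prove this, write $B=\|f\|_{\dot B^{1/2}_{\infty,\infty}}$ and $a_j=\|\Delta_j f\|_{\dot H^{-3/2}}$, where $\Delta_j$ are homogeneous Littlewood--Paley blocks and $S_J=\sum_{j< J}\Delta_j$. Then $\sum_j a_j^2\approx \|u\|_{L^2}^2$ up to constants. Use the layer-cake formula
\begin{equation*}
\|f\|_{L^8}^8=8\int_0^\infty \lambda^7\,\bigl|\{|f|>\lambda\}\bigr|\,d\lambda .
\end{equation*}
For each $\lambda$, choose $J=J(\lambda)$ with $2^{-J/2}B\approx c\lambda$. Then
\begin{equation*}
\|(I-S_J)f\|_{L^\infty}\le \sum_{j\ge J}2^{-j/2}B\le \lambda/2
\end{equation*}
for a suitable small $c$, so $\{|f|>\lambda\}\subset\{|S_Jf|>\lambda/2\}$. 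Chebyshev's inequality and orthogonality then give
\begin{equation*}
\bigl|\{|f|>\lambda\}\bigr|\les \lambda^{-2}\|S_Jf\|_{L^2}^2\les \lambda^{-2}\sum_{j<J(\lambda)}2^{3j}a_j^2 .
\end{equation*}
Interchanging the sum and the integral, each $j$ contributes only over the range $\lambda\les 2^{-j/2}B$. This yields
\begin{equation*}
\|f\|_{L^8}^8\les \sum_j 2^{3j}a_j^2\int_0^{C2^{-j/2}B}\lambda^5\,d\lambda\approx \sum_j 2^{3j}a_j^2\,2^{-3j}B^6= B^6\sum_j a_j^2 .
\end{equation*}
This is the desired bound, and taking the eighth root gives \eqref{EQ300}.

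I expect the main obstacle to be exactly this step. The naive blockwise interpolation $\|\Delta_j f\|_{L^8}\le\|\Delta_j f\|_{L^2}^{1/4}\|\Delta_j f\|_{L^\infty}^{3/4}$ gives a bound that is uniform in $j$ and so fails to sum logarithmically. Likewise, $L^\infty$ control of $D\omega$ cannot be turned into $L^p$ control directly on $\RR^3$. The $\lambda$-dependent frequency cutoff in the layer-cake argument is what removes the logarithm, and the exponent bookkeeping there, $\lambda^{7-2}$ integrated up to $2^{-j/2}B$ exactly cancelling $2^{3j}$, is the point to verify carefully.

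For $\Omega=\TT^3$, the same argument applies to the mean-zero part of $f$, using the periodic Littlewood--Paley decomposition over dyadic shells $j\ge0$. The only difference is the lowest block: the cutoff $J(\lambda)$ cannot go below the first shell, so for $\lambda\gtrsim B$ the high part is still controlled as above, but the range $\lambda\les B$ must be handled separately. In that range, Bernstein and boundedness of the domain give $\|S_0 f\|_{L^8}\les\|u\|_{L^2}$, which produces the additive term $\|u\|_{L^2}$ in \eqref{EQ301}.
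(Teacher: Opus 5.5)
Your proof is correct, and it takes a genuinely different route from the paper.

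\textbf{How the two routes differ.} The paper proves \eqref{EQ300} by chaining inhomogeneous fractional Gagliardo--Nirenberg inequalities (Brezis--Mironescu) with the Calder\'on--Zygmund bound $\|u\|_{W^{3/2,8}}\les\|\omega\|_{W^{1/2,8}}+\|u\|_{L^8}$. It then rescales to remove the lower-order terms and absorbs powers of $\|\omega\|_{L^4}$ and $\|D^{1/2}\omega\|_{L^8}$; see \eqref{EQ302}--\eqref{EQ308}. On $\TT^3$ it multiplies by a cutoff $\varphi$, applies the $\RR^3$ bound to $u\varphi$, and controls the commutator and lower-order terms by further interpolation and Young's inequality, as in \eqref{EQ308a}--\eqref{EQ314}.

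You instead prove, in one step, the refined Sobolev inequality $\|f\|_{L^8}\les\|f\|_{\dot H^{-3/2}}^{1/4}\|f\|_{\dot B^{1/2}_{\infty,\infty}}^{3/4}$. You do this with the layer-cake formula and a $\lambda$-dependent Littlewood--Paley cutoff (the G\'erard--Meyer--Oru/Chemin--Xu mechanism). The step you flagged checks out: $j<J(\lambda)$ forces $\lambda<2^{-j/2}B/c$, and $\int_0^{2^{-j/2}B/c}\lambda^5\,d\lambda\approx 2^{-3j}B^6$ exactly cancels $2^{3j}$.

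\textbf{What each route buys.} Yours is self-contained and has no inhomogeneous-to-homogeneous rescaling or absorption step. It also gives a slightly stronger statement, with $\|D\omega\|_{L^\infty}$ replaced by $\|\omega\|_{\dot B^{1}_{\infty,\infty}}$. It covers non-divergence-free $\tilde u$ automatically, since $\|D^{1/2}\curl\tilde u\|_{\dot H^{-3/2}}\le\|\tilde u\|_{L^2}$. The paper's route uses only off-the-shelf interpolation inequalities, and the same chaining template is reused for Lemma~\ref{L03} and Theorem~\ref{T04}.

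\textbf{A slip in the periodic paragraph.} You have the two $\lambda$-ranges swapped. Because $2^{-J/2}B\approx c\lambda$, the cutoff $J(\lambda)$ drops below the lowest shell for large $\lambda\gtrsim B$. For $\lambda\les B$ the $\RR^3$ argument goes through verbatim. For $\lambda\gtrsim B$, clamp $J$ at the lowest shell; the high part is then bounded by $\les B\le\lambda/2$. The remaining piece of the layer-cake integral is $\les\|S_0f\|_{L^8}^8\les\|u\|_{L^2}^8$, which gives \eqref{EQ301}.

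In fact you do not need the additive term at all. Since $f=D^{1/2}\omega$ has zero mean, only shells with $2^j\gtrsim 1$ occur, so $\|f\|_{L^\infty}\le\sum_{2^j\gtrsim 1}2^{-j/2}B\les B$. The level sets are therefore empty for $\lambda\gtrsim B$, and your argument yields the homogeneous bound \eqref{EQ300} on $\TT^3$ directly.
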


\begin{proof}
First, we consider the case $\Omega = \mathbb{R}^3$. 
Using the fractional Gagliardo-Nirenberg inequalities~\cite[Theorem~1]{BM}, we get
  \begin{equation}
   \|\omega\|_{W^{1/2,8}}
   \les
   \|\omega\|_{L^4}^{1/2} \|\omega\|_{W^{1,\infty}}^{1/2}
   .
   \label{EQ302}
  \end{equation}
To estimate the first term on the right-hand side, we have
  \begin{equation}
    \|\omega\|_{L^4}
    \les 
    \|u\|_{W^{1,4}}
    \les
    \|u\|_{W^{1/2,8/3}}^{1/2} \|u\|_{W^{3/2,8}}^{1/2}
    \les
    \|u\|_{W^{1/2,8/3}}^{1/2} \|\omega\|_{W^{1/2,8}}^{1/2}
    + \|u\|_{W^{1/2,8/3}}^{1/2} \|u\|_{L^8}^{1/2}
    ,
   \label{EQ303}
  \end{equation}
where, after the third inequality,
we used the Calder\'on-Zygmund theorem, see~\cite{St}. 
After rescaling,~\eqref{EQ303} gives 
  \begin{equation}
   \|\omega\|_{L^4}
   \les 
   \|D^{1/2}u\|_{L^{8/3}}^{1/2} \|D^{1/2}\omega\|_{L^{8}}^{1/2}
   .
   \label{EQ304}
  \end{equation}
Again, by the fractional Gagliardo-Nirenberg inequality, we have
  \begin{equation}
   \|u\|_{W^{1/2,8/3}}
   \les
   \|u\|_{L^2}^{1/2} \|u\|_{W^{1,4}}^{1/2}
   \les
   \|u\|_{L^2}^{1/2} \|\omega\|_{L^4}^{1/2}
   + \|u\|_{L^2}^{1/2} \|u\|_{L^4}^{1/2}
   ,
   \label{EQ305}
  \end{equation}
and, in particular,
  \begin{equation}
   \|D^{1/2}u\|_{L^{8/3}} 
   \les
   \|u\|_{L^2}^{1/2}
   \|\omega\|_{L^4}^{1/2}
   .
   \label{EQ306}
  \end{equation}
Substituting \eqref{EQ306} into~\eqref{EQ304}, we get
  \begin{equation}
   \|\omega\|_{L^4}
   \les
   \|u\|_{L^2}^{1/3} \|D^{1/2}\omega\|_{L^{8}}^{2/3}
   .
   \label{EQ307}
  \end{equation}
Furthermore, replacing~\eqref{EQ307} into~\eqref{EQ302} gives us
  \begin{equation}
   \|D^{1/2}\omega\|_{L^8}
   \les
   \|u\|_{L^2}^{1/4}\|\omega\|_{W^{1,\infty}}^{3/4}
   .
   \label{EQ308}
  \end{equation}
After rescaling to eliminate the lower order term in~\eqref{EQ308}, we obtain~\eqref{EQ300}.
Also, by considering the projection onto divergence-free vector fields, the estimate~\eqref{EQ300} is valid for $\tilde{\omega} = \curl \tilde{u}$ where $\tilde{u}$ is not divergence free.

Now, we consider the case $\Omega = \mathbb{T}^3$.
Let $\varphi$ be a smooth cut-off function equal to $1$ on the cell $[0,1]^3$ and $0$ outside of some neighborhood containing the cell. 
Then, we may interpret  $u\varphi$ as a function on~$\mathbb{R}^3$.
In all that follows, $L^p$ denotes a norm over $\mathbb{T}^3$ unless noted otherwise.
First, we note that 
  \begin{equation}
   \|D^{1/2} \omega\|_{L^8}
   \les
   \|\varphi D^{1/2} \omega\|_{L^8(\mathbb{R}^3)}
   \les
   \|D^{1/2}\curl(u\varphi)\|_{L^8(\mathbb{R}^3)}
   + \|\varphi D^{1/2} \curl u - D^{1/2} \curl (u\varphi)\|_{L^8(\mathbb{R}^3)}
   .
   \label{EQ308a}
  \end{equation} 
We focus on the  first term on the far-right side of~\eqref{EQ308a}.
By~\eqref{EQ300}, we have
  \begin{align}
   \begin{split}
   &\|D^{1/2}\curl (u\varphi)\|_{L^8(\mathbb{R}^3)} 
   \les
   \|u\varphi\|_{L^2(\mathbb{R}^3)}^{1/4} \|D\curl(u\varphi)\|_{L^\infty(\mathbb{R}^3)}^{3/4}
   \\&\indeq\indeq\les
   \|u\|_{L^2}^{1/4}
   \Big(
     \|D\omega \varphi\|_{L^\infty(\mathbb{R}^3)}^{3/4} 
     + \|Du D\varphi\|_{L^\infty(\mathbb{R}^3)}^{3/4}
     + \|\omega D\varphi\|_{L^\infty(\mathbb{R}^3)}^{3/4}
     + \|u D^2 \varphi\|_{L^\infty(\mathbb{R}^3)}^{3/4}
   \Big)
   \\&\indeq\indeq\les
   \|u\|_{L^2}^{1/4}
   \Big(
     \|D\omega\|_{L^\infty}^{3/4} 
     + \|Du \|_{L^\infty}^{3/4}
     + \|u \|_{L^\infty}^{3/4}
   \Big)
   .
   \end{split}
   \label{EQ309}
  \end{align} 
We estimate the lower order terms on the right-hand side.
Indeed, by the Gagliardo-Nirenberg inequalities, 
  \begin{equation}
   \|u\|_{L^\infty}
   \les
   \|u\|_{L^2}^{3/7}\|D^{3/2} u\|_{L^8}^{4/7} 
   +\|u\|_{L^2}^{3/7} \|u\|_{L^8}^{4/7}
   .
   \llabel{EQ310}
  \end{equation}
However, we may bound $\|u\|_{L^8} \les \|u\|_{L^\infty}$ and use Young's inequality to obtain
  \begin{equation}
   \|u\|_{L^\infty}
   \les 
   \|u\|_{L^2}^{3/7} \|D^{1/2} \omega\|_{L^8}^{4/7}
   + \|u\|_{L^2}
   \les
   \|D^{1/2}\omega\|_{L^8}
   + \|u\|_{L^2}
   .
   \label{EQ311}
  \end{equation}
Next, we consider $\|Du\|_{L^\infty}$ on the far-right side of~\eqref{EQ309}. 
Using similar reasoning as in~\eqref{EQ311}, we have
  \begin{equation}
   \|Du\|_{L^\infty}
   \les
   \|u\|_{L^2}^{1/21} \|D^{1/2} \omega\|_{L^8}^{20/21}
   + \|u\|_{L^2}
   \les
   \|D^{1/2}\omega\|_{L^8}
   + \|u\|_{L^2}
   .
   \label{EQ312}
  \end{equation}
Hence,~\eqref{EQ309},~\eqref{EQ311}, and~\eqref{EQ312} imply
  \begin{equation}
   \|D^{1/2}\curl( u\varphi)\|_{L^8(\mathbb{R}^3)}
   \les
   \|u\|_{L^2}^{1/4}\|D\omega\|_{L^\infty}^{3/4}
   + \|u\|_{L^2}^{1/4} \|D^{1/2}\omega\|_{L^8}^{3/4}
   + \|u\|_{L^2}
   .
   \label{EQ313}
  \end{equation}

Next, we consider the commutator term in~\eqref{EQ308a}.
If $\zeta$ is a smooth cut-off function equivalent to $1$ on $\supp(\varphi)$, 
we may replace $u$ in the commutator term with $\tilde{u} = u\zeta$.
Observe that
  \begin{align}
   \begin{split}
   &\|\varphi D^{1/2} \curl \tilde{u}- D^{1/2} \curl (\tilde{u}\varphi)\|_{L^8(\mathbb{R}^3)}
   \\&\indeq\indeq\les
   \|D^{1/2}\tilde{u}\|_{L^8(\mathbb{R}^3)}
   + \|\tilde u\|_{L^8(\mathbb{R}^3)}
   \les
   \|D^{1/2} u\|_{L^8}
   + \|u\|_{L^8}
   \\&\indeq\indeq\les
   \|u\|_{L^2}^{8/21} \|D^{1/2} \omega\|_{L^8}^{13/21}
   + \|u\|_{L^2}^{4/7} \|D^{1/2}\omega\|_{L^8}^{3/7}
   + \|u\|_{L^2}
   ,
   \end{split}
   \label{EQ314}
  \end{align}
where, in the last inequality, we repeated the reasoning in~\eqref{EQ311} and~\eqref{EQ312}.

Finally, we combine the estimates~\eqref{EQ308a},~\eqref{EQ313}, and~\eqref{EQ314} and use Young's inequality to attain~\eqref{EQ301}.
\end{proof}

We now prove Theorem~\ref{T02}.
\begin{proof}[Proof of Theorem~\ref{T02}]
We first consider $\Omega = \mathbb{R}^3$. 
We perform $L^p$ estimates on $D^{1/2}\omega$,
though, we will later fix $p = 8$.
Let $t \in [0,T_\ast)$.
We claim that, for $p>6$,
  \begin{equation}
   \frac{d}{dt} \|D^{1/2}\omega\|_{L^p}
   \lec
   \|D^{1/2}\omega\|_{L^p}^{1+\alpha}
   \comma
   \text{where}   
   \,\,
   \alpha = \frac{5}{6-6/p}
   ,
   \label{EQ201}
  \end{equation}
and the constant depends on $\|u_0\|_{L^2}$ and~$p$.
To see this, we differentiate the vorticity equation~\eqref{EQ103} and test
the $i$-th component of the equation with
$|D^{1/2}\omega|^{p-2}D^{1/2}\omega_i$
to obtain
  \begin{align}
   \begin{split}
    \frac{d}{dt} \|D^{1/2}\omega\|_{L^p}^p
    &\les
    \|D^{1/2}\nabla(u \omega) - u D^{1/2}\nabla\omega\|_{L^p} \|D^{1/2}\omega\|_{L^p}^{p-1}
    \\&\indeq\indeq
    + \|\omega\|_{L^\infty}\|D^{3/2}u\|_{L^p} \|D^{1/2}\omega\|_{L^p}^{p-1}
    + \|Du\|_{L^\infty} \|D^{1/2}\omega\|_{L^p}^p
    \\&\les
    \|Du\|_{L^\infty} \|D^{1/2}\omega\|_{L^p}^p
    ,
   \end{split}
    \label{EQ202}
  \end{align}
written symbolically,
where we used the Kato-Ponce inequality, the Biot-Savart law, and the bound $\|\omega\|_{L^\infty}\les \|Du\|_{L^\infty}$ in the second inequality. 
Next, we use the Gagliardo-Nirenberg inequality and the Biot-Savart law to bound
  \begin{equation}
   \|Du\|_{L^\infty} 
   \les
   \|u\|_{L^2}^{1-\alpha}\|D^{1/2}\omega\|_{L^p}^{\alpha}
   \comma 
   \text{where } 
   \alpha = \frac{5}{6- 6/p}
   .
   \label{EQ203}
  \end{equation}
Substituting this into~\eqref{EQ202} and using the conservation of energy completes the proof of the claim~\eqref{EQ201}.

Now, we derive a pointwise lower bound on~$\|D^{1/2}\omega\|_{L^p}$. 
Denote $z(t) = \|D^{1/2}\omega(t)\|_{L^p}$.
Integration of~\eqref{EQ201}
gives
  \begin{equation}
   -\frac{1}{\alpha} z(T)^{-\alpha}
   + \frac{1}{\alpha} z(t)^{-\alpha}
   \leq
   C (T - t)
   ,
   \inon{$t < T < T_\ast$,}
   \llabel{EQ206}
  \end{equation}
from which we deduce
  \begin{equation}
  z(T)^\alpha 
   \leq
   \frac{1}{z(t)^{-\alpha} - C (T-t)}
   \comma
   t < T < T_\ast
   .
   \llabel{EQ207}
  \end{equation}
Considering the denominator on the right-hand side, 
we note that $z(T)$ is bounded if $T$ satisfies
  \begin{equation}
   z(t)
   < 
   \left( \frac{1}{C (T-t)}\right)^{1/\alpha}
   .
   \label{EQ208}
  \end{equation}
However, by~\eqref{EQ203} and the bound $\|\omega\|_{L^\infty} \les \|Du\|_{L^\infty}$,
we have that $z = \|D^{1/2}\omega\|_{L^\infty}$ blows up at~$T_\ast$.
Hence, the estimate~\eqref{EQ208} must be false for $T = T_\ast$,
that is,
  \begin{equation}
   \|D\omega(t)\|_{L^p} 
   \geq
   \left( \frac{1}{C (T_\ast-t)}\right)^{1/\alpha}
   \label{EQ210}
  \end{equation}
for all $t \in [0,T_\ast)$. 

Now, we fix $p = 8$.
By Lemma~\ref{L01}, the estimate~\eqref{EQ210}, and the conservation of energy, we obtain
  \begin{equation}
   \|D\omega(t)\|_{L^\infty}
   \geq
   \frac{1}{C(T_\ast -t)^{7/5}}
   \comma
   t \in [0,T_\ast)
   ,
   \llabel{EQ211}
  \end{equation}
where we note that $4/3\alpha = 7/5$ when $p = 8$.
The proof in the case that $\Omega = \mathbb{R}^3$ is complete.

Next, we consider $\Omega = \mathbb{T}^3$.
Similarly as in~\eqref{EQ202}, we derive
  \begin{equation}
   \frac{d}{dt} \|D^{1/2}\omega\|_{L^p}
   \les
   \|D\omega\|_{L^\infty} \|D^{1/2} \omega\|_{L^p}
   .
   \label{EQ212}
  \end{equation}
The Gagliardo-Nirenberg inequality gives
  \begin{equation}
   \|Du\|_{L^\infty} 
   \les
   \|u\|_{L^2}^{1-\alpha}\|D^{1/2}\omega\|_{L^p}^{\alpha}
   + \|u\|_{L^2}
   ,
   \quad\quad
   \text{where }
   \alpha = \frac{5}{6- 6/p}
   .
   \llabel{EQ213}
  \end{equation}
Hence, the estimate~\eqref{EQ212} becomes
  \begin{equation}
   \frac{d}{dt} \|D^{1/2}\omega\|_{L^p}
   \les
   \|D^{1/2}\omega\|_{L^p} + \|D^{1/2}\omega\|_{L^p}^{1+\alpha}
   \les
   1+ \|D^{1/2}\omega\|_{L^p}^{1+\alpha}
   \les
   \left(1+ \|D^{1/2}\omega\|_{L^p}\right)^{1+\alpha}
   .
   \llabel{EQ214}
  \end{equation}
Repeating the same analysis performed for $\Omega = \mathbb{R}^3$, we deduce the lower bound 
  \begin{equation}
   \|D^{1/2}\omega(t)\|_{L^p}
   \geq 
   \left( \frac{1}{C(T_\ast -t)}\right)^{1/\alpha}
   - C
   ,
   \quad\quad
   t \in [0,T_\ast)
   ,
   \label{EQ215}
  \end{equation}
where $C$ is a constant depending on~$\|u_0\|_{L^2}$.
By Lemma~\ref{L01}, the above lower bound with $p=8$ gives
  \begin{equation}
   \|D\omega\|_{L^\infty} 
   \geq
   C\|D^{1/2}\omega\|_{L^p}^{4/3}
   - C
   .
   \llabel{EQ216}
  \end{equation}
Finally, in conjunction with~\eqref{EQ215}, we obtain
  \begin{equation}
   \|D \omega(t)\|_{L^\infty}
   \geq
   \frac{1}{C(T_\ast -t)^{7/5}} - C
   .
   \label{EQ217}
  \end{equation}
By considering $T_\ast -t < 1/C$ where $C$ depends on $\|u_0\|_{L^2}$, we may neglect the constant on the right-hand side of~\eqref{EQ217} to complete the proof. 
\end{proof}

Next, we prove lower bounds on the higher order derivatives~$\|D^k \omega\|_{L^\infty}$.
We begin with a lemma.

\begin{Lemma}
\label{L03}
Assume the hypothesis of Theorem~\ref{T03}, and let $k \geq 2$ be a positive integer.
For $\Omega = \mathbb{R}^3$ or $\mathbb{T}^3$, we have
  \begin{equation}
   \|D^{k-1} \omega\|_{L^{2(k+1)}}
   \les
   \|u\|_{L^2}^{1/(k+1)} \|D^k \omega\|_{L^\infty}^{k/(k+1)}
   .
   \llabel{EQ250}
  \end{equation}
\end{Lemma}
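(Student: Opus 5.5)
We mimic the proof of Lemma~\ref{L01}, treating first $\Omega=\mathbb{R}^3$ and then reducing the torus case to it by the cut-off argument. The first check is scaling. Under $u\mapsto u(\lambda\cdot)$ we have $\|u\|_{L^2}\sim\lambda^{-3/2}$, $\|D^k\omega\|_{L^\infty}\sim\lambda^{k+1}$, and $\|D^{k-1}\omega\|_{L^{2(k+1)}}\sim\lambda^{k-3/(2(k+1))}$. The right-hand side scales like $\lambda^{-3/(2(k+1))+k}$, which matches. So a homogeneous Gagliardo--Nirenberg inequality on $\mathbb{R}^3$ with exactly these exponents is plausible.

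The core step is a two-level interpolation, as in \eqref{EQ302}--\eqref{EQ308}. First apply Gagliardo--Nirenberg between $L^{q}$ at order $0$ and $L^\infty$ at order $k$. We take $f=\omega$ and $\theta=(k-1)/k$:
\begin{equation*}
\|D^{k-1}\omega\|_{L^{2(k+1)}}\les \|\omega\|_{L^{q}}^{1/k}\|D^k\omega\|_{L^\infty}^{(k-1)/k}.
\end{equation*}
The exponent condition $\frac{1}{2(k+1)}-\frac{k-1}{3}=\frac{1}{kq}-\frac{k-1}{3}$ gives $q=2(k+1)/k$. Next we bound $\|\omega\|_{L^{q}}$ in terms of $\|u\|_{L^2}$ and the left-hand side $A=\|D^{k-1}\omega\|_{L^{2(k+1)}}$. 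By Calder\'on--Zygmund, $\|\omega\|_{L^q}\les\|Du\|_{L^q}$. Gagliardo--Nirenberg between $\|u\|_{L^2}$ and $\|D^k u\|_{L^{2(k+1)}}\les A$ (Biot--Savart, Calder\'on--Zygmund) then gives, with interpolation parameter $1/k$,
\begin{equation*}
\|Du\|_{L^{q}}\les\|u\|_{L^2}^{(k-1)/k}A^{1/k}.
\end{equation*}
The indices check: $\frac1q-\frac13=\frac{k-1}{2k}+\frac1k\bigl(\frac{1}{2(k+1)}-\frac k3\bigr)$, which is consistent. Substituting gives
\begin{equation*}
A\les \|u\|_{L^2}^{(k-1)/k^2}A^{1/k^2}\|D^k\omega\|_{L^\infty}^{(k-1)/k},
\end{equation*}
and hence $A^{(k^2-1)/k^2}\les\ldots$. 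Solving for $A$ yields $A\les\|u\|_{L^2}^{1/(k+1)}\|D^k\omega\|_{L^\infty}^{k/(k+1)}$, which is the claim. This absorption is legitimate because $A<\infty$ under the regularity hypothesis $r>5/2+m$.

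The main technical points are the following. Gagliardo--Nirenberg with an $L^\infty$ endpoint at the top order requires care; this is the exceptional case of Nirenberg's theorem. It only fails when $k-j-3/r$ is a nonnegative integer with $r=\infty$, which would force $k-(k-1)=1\in\mathbb{Z}$. So we must avoid $\theta=1$, and here $\theta<1$ is fine. Alternatively, we can cite the general form in~\cite{BM}. The Calder\'on--Zygmund step needs $1<q<\infty$, and indeed $q=2(k+1)/k\in(2,3]$. If inhomogeneous versions are used as in \eqref{EQ303}, rescaling removes the lower-order terms exactly as in the proof of Lemma~\ref{L01}.

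For $\mathbb{T}^3$ we follow the second half of the proof of Lemma~\ref{L01}. Localize with $\varphi$ and apply the $\mathbb{R}^3$ estimate to $\curl(u\varphi)$; this estimate is valid for non-divergence-free fields via the Leray projection. We then bound the lower-order terms $\|D^ju\|_{L^\infty}$ and the commutator by interpolation and Young. The lower-order terms produced this way are absorbed into the main term or into $\|u\|_{L^2}$; since the statement has no additive term, we also use that on $\mathbb{T}^3$ a mean-zero function $D^k\omega$ controls the lower norms by Poincar\'e. I expect this bookkeeping on the torus to be the main obstacle, the $\mathbb{R}^3$ case being the clean two-step interpolation above.
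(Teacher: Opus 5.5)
Your $\mathbb{R}^3$ argument is correct. Both interpolation steps are pure ($\theta=j/m$) Gagliardo--Nirenberg inequalities, the exponents check, and absorbing $A^{1/k^2}$ is legitimate since $A<\infty$. Your first step is what the paper gets by chaining the rungs $\|D^j\omega\|_{L^{p_j}}^2\lesssim\|D^{j-1}\omega\|_{L^{p_{j-1}}}\|D^{j+1}\omega\|_{L^{p_{j+1}}}$ with $p_j=2(k+1)/(k-j)$. The real difference is the bottom rung:
\begin{itemize}
\item The paper closes with $\|\omega\|_{L^{p_0}}^2\lesssim\|u\|_{L^2}\|D\omega\|_{L^{p_1}}$. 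It gets this by writing $\int|\omega|^{p_0}=\int(\nabla\times u)\cdot\omega|\omega|^{p_0-2}$, integrating by parts and applying H\"older, so no singular integrals are needed.
\item You instead pass through $\|Du\|_{L^q}$, interpolate $u$ from $L^2$ up to $D^ku\in L^{2(k+1)}$, and return to $D^{k-1}\omega$ by Biot--Savart and Calder\'on--Zygmund. Your bound $\|\omega\|_{L^q}\lesssim\|Du\|_{L^q}$ is pointwise and needs no Calder\'on--Zygmund.
\end{itemize}

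The gap is in the torus case. Localizing as in Lemma~\ref{L01} produces non-homogeneous terms involving $u$ itself. For example, $u\,D^{k+1}\varphi$ inside $D^k\curl(u\varphi)$ gives $\|u\|_{L^2}^{1/(k+1)}\|u\|_{L^\infty}^{k/(k+1)}$, and after Young an additive $\|u\|_{L^2}$ appears. This is exactly why Lemma~\ref{L01} on $\mathbb{T}^3$ ends with $+\|u\|_{L^2}$. Poincar\'e applied to the mean-zero $D^k\omega$ cannot remove such terms, because they see the spatial mean $\bar u$, which $\omega$ does not control. This mean is conserved by Euler and may be nonzero: for $u\equiv c$ these terms are of size $|c|$ while $D^k\omega=0$. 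So your torus step, as written, only yields the estimate with an additive term.

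One repair is to first replace $u$ by $u-\bar u$, which leaves $\omega$ unchanged and does not increase $\|u\|_{L^2}$. Poincar\'e for the mean-free fields $u-\bar u,\omega,\dots,D^{k-1}\omega$, together with $\|\nabla u\|_{L^2}=\|\omega\|_{L^2}$, then gives $\|u-\bar u\|_{L^2}\lesssim\|D^k\omega\|_{L^\infty}$. Hence $\|u-\bar u\|_{L^2}\lesssim\|u\|_{L^2}^{1/(k+1)}\|D^k\omega\|_{L^\infty}^{k/(k+1)}$, and the additive term is absorbed.

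Simpler still, the cutoff is unnecessary. The paper's rungs and base step are integration-by-parts identities with no boundary terms, so they hold verbatim on $\mathbb{T}^3$ with no lower-order terms, and the lemma follows on both domains at once. Your own two steps also transfer directly. Pure interpolation inequalities hold on $\mathbb{T}^3$ without extra terms, and so does the periodic Calder\'on--Zygmund bound $\|D^ku\|_{L^p}\lesssim\|D^{k-1}\omega\|_{L^p}$ for $1<p<\infty$.
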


\begin{proof}
Define the exponents 
  \begin{equation}
    p_j
    =
    \frac{2(k+1)}{k-j}
    \comma
    j = 0,1,\dots, k-1
    ,
    \llabel{EQ251}
  \end{equation}
and $p_k = \infty$. 
By the Gagliardo-Nirenberg inequalities, we have 
  \begin{equation}
   \|D^j \omega\|_{L^{p_{j}}}
   \les
   \|D^{j-1} \omega\|_{L^{p_{j-1}}}^{1/2} \|D^{j+1} \omega\|_{L^{p_{j+1}}}^{1/2}
   \comma
   j = 1,\dots, k-1
   .
   \label{EQ252}
  \end{equation}
Also, note that 
  \begin{equation}
   \|\omega\|_{L^{p_0}}
   \les
   \|u\|_{L^2}^{1/2} \|D\omega\|_{L^{p_1}}^{1/2}
   ,
   \label{EQ253}
  \end{equation}
which is derived by integrating by parts from $\omega = \nabla \times u$ and using H\"older's inequality. 
We then replace~\eqref{EQ253} into~\eqref{EQ252} for $j=1$. 
Substituting the resulting bound into~\eqref{EQ252} for $j=2$ and repeating  this process up to $j = k-1$ completes the proof. 
\end{proof}

We now prove Theorem~\ref{T03}. 
\begin{proof}[Theorem~\ref{T03}]
We only present the proof for $\Omega = \mathbb{R}^3$
as the proof for $\mathbb{T}^3$ is analogous. 
In a similar fashion as in~\eqref{EQ201}, we have
  \begin{equation}
   \frac{d}{dt} \|D^{k-1}\omega\|_{L^p} 
   \les 
   \|Du\|_{L^\infty} \|D^{k-1}\omega\|_{L^p}
   .
   \label{EQ260}
  \end{equation}
By the Gagliardo-Nirenberg inequality and the Biot-Savart law, we have
  \begin{equation}
   \|Du\|_{L^\infty}
   \les 
   \|u\|_{L^2}^{1-\alpha} \|D^{k-1}\omega\|_{L^p}^{\alpha}
   ,
   \quad\quad
   \text{where }
   \alpha = \frac{5}{2k + 3 - 6/p}
   .
   \llabel{EQ261}
  \end{equation}
Substituting this into~\eqref{EQ260}, we have, by the conservation of energy, 
  \begin{equation}
   \frac{d}{dt} \|D^{k-1}\omega\|_{L^p}
   \les
   \|D^{k-1}\omega\|_{L^p}^{1+\alpha}
   .
   \llabel{EQ262}
  \end{equation}
As in the proof of Theorem~\ref{T02}, we derive the lower bound,
  \begin{equation}
   \|D^{k-1}\omega(t)\|_{L^p}
   \geq
   \left( \frac{1}{C(T_\ast -t)}\right)^{1/\alpha}
   \llabel{EQ263}
  \end{equation}
for all $t \in [0,T_\ast)$. 
Now, we fix $p = 2(k+1)$. By Lemma~\ref{L03}, we get
  \begin{equation}
   \|D^k \omega\|_{L^\infty}
   \ges
   \frac{1}{C(T_\ast -t)^{2k/5 + 1}}
   \comma
   t \in [0,T_\ast)
   ,
   \llabel{EQ264}
  \end{equation}  
where we note that
%$\frac{k+1}{k}\cdot \frac1\alpha = 2k/5 + 1$.
${((k+1)}/{k})(1/\alpha) = 2k/5 + 1$,
completing the proof.
\end{proof}

Finally, we prove Theorem~\ref{T04}.
\begin{proof}[Proof of Theorem~\ref{T04}]
First, we consider the case $\Omega = \mathbb{R}^3$,
for which we claim that
  \begin{equation}
   \|D^{1/2} \omega\|_{L^8} 
   \les
   \|u\|_{L^2}^{1/4}
   \|D\sigma\|_{L^\infty}^{3/4}
   .
   \label{EQ09}
  \end{equation}
To prove this inequality, we first write
  \begin{equation}
   \Vert D^{1/2}\sigma\Vert_{L^{8}}
   \lec
   \|\sigma\|_{L^4}^{1/2}
   \|D\sigma\|_{L^{\infty}}^{1/2}
   ,
   \llabel{EQ08}
  \end{equation}
noting that the lower-order terms can be scaled away.
By the Calder\'on-Zygmund theorem, we then get
  \begin{equation}
   \Vert D^{1/2}\omega\Vert_{L^{8}}
   \lec
   \|\omega\|_{L^4}^{1/2}
   \|D\sigma\|_{L^{\infty}}^{1/2}
   .
   \llabel{EQ13}
  \end{equation}
The rest of the proof proceeds exactly as the proof of
Lemma~\ref{L01},
thus leading to~\eqref{EQ09}.
Then, using \eqref{EQ09}, we invoke Theorem~\ref{T01} and the conservation of energy to get
  \begin{equation}
   \|D\sigma\|_{L^\infty} 
   \gec
   \|D^{1/2}\omega\|_{L^8}^{4/3}
   ,
   \llabel{EQ222}
  \end{equation}
and the proof for the case $\mathbb{R}^{3}$ is
complete.
For $\Omega=\mathbb{T}^{3}$, we obtain
  \begin{equation}
   \|D^{1/2} \omega\|_{L^8} 
   \les
   \|u\|_{L^2}^{1/4}
   \|D\sigma\|_{L^\infty}^{3/4}
   +
   \Vert u\Vert_{L^2}
   \llabel{EQ10}
  \end{equation}
instead of \eqref{EQ09}; the estimates
are similar to those in $\mathbb{R}^{3}$ but include lower order terms, which can be easily controlled. We omit further details.
\end{proof}

\section*{Acknowledgments}
The authors were supported in part by the NSF grant DMS-2205493.

\end{document}